\newtheorem{thm}{Theorem}[section]
\newtheorem{prop}[thm]{Proposition}
\newtheorem{lemma}[thm]{Lemma}
\newtheorem{cor}[thm]{Corollary}
\newtheorem{remark}[thm]{Remark}
\newtheorem{conjecture}[thm]{Conjecture}
\newtheorem{prop-defn}[thm]{Proposition/Definition}
 \newcommand{\ba}{\begin{eqnarray}}
   \newcommand{\na}{\end{eqnarray}}
   \newcommand{\ban}{\begin{eqnarray*}}
   \newcommand{\nan}{\end{eqnarray*}}
\newcommand{\bC}{{\mathbb C}}
\newcommand{\bP}{{\mathbb P}}
\newcommand{\bQ}{{\mathbb Q}}
\newcommand{\bR}{{\mathbb R}}
\newcommand{\bZ}{{\mathbb Z}}
\newcommand{\cO}{{\mathcal O}}
  \newcommand{\<}{\langle}
  \renewcommand{\>}{\rangle}
\newtheorem*{conjO}{Conjecture $\cO$}
\newtheorem*{Galkin}{Galkin's lower bound conjecture }
\begin{document}{\allowdisplaybreaks[4]

\title{On the quantum cohomology of blow-ups of four-dimensional quadrics}

\author{Jianxun Hu }
\address{School of Mathematics, Sun Yat-sen University, Guangzhou 510275, P.R. China}

\email{stsjxhu@mail.sysu.edu.cn}
\thanks{ 
 }

\author{Hua-Zhong Ke}
\address{School of Mathematics, Sun Yat-sen University, Guangzhou 510275, P.R. China}
\email{kehuazh@mail.sysu.edu.cn}
\thanks{ 
 }

\author{Changzheng Li}
 \address{School of Mathematics, Sun Yat-sen University, Guangzhou 510275, P.R. China}
\email{lichangzh@mail.sysu.edu.cn}

\author{Lei Song }
\address{School of Mathematics, Sun Yat-sen University, Guangzhou 510275, P.R. China}
\email{songlei3@mail.sysu.edu.cn}
\thanks{ 
 }

\thanks{2010 Mathematics Subject Classification. Primary 14N35. Secondary 14J45, 14J33.}

\date{
      }


 \keywords{Quantum cohomology. Blow-up. Quadric hypersurface}


 \begin{abstract}
    We propose a conjecture relevant to Galkin's lower bound conjecture, and verify it 
    for the blow-ups of a four-dimensional quadric at a point or along a projective plane. We also show that Conjecture $\mathcal{O}$ holds in these two cases.
  \end{abstract}

\maketitle

\section{Introduction}

The (small) quantum cohomology $QH^*(X)=(H^*(X)\otimes_\mathbb{Q}\mathbb{Q}[\mathbf{q}], \star)$ of a Fano manifold  $X$ is a deformation of the classical cohomology ring $H^*(X)=H^*(X,\mathbb{Q})$, by incorporating three-pointed genus zero  Gromov-Witten invariants. The quantum multiplication by the first Chern class of $X$ induces a linear operator $\hat c_1$ on the finite-dimensional vector space $QH^\bullet(X):=QH^{\rm ev}(X)|_{\mathbf{q}=1}$. Attention has been drawn to the spectral properties of $\hat c_1$ in the past five years, mainly due to the following two conjectures. One is the  remarkable Conjecture $\mathcal{O}$ proposed by Galkin, Golyshev and Iritani \cite{GGI}, which concerns with  eigenvalues of $\hat c_1$ of modulus equal to the spectral  radius $\rho(\hat c_1)$ and underlies their Gamma conjecture I. The other one is an interesting conjecture proposed   by Galkin \cite{Gal}:
 \begin{Galkin}
   $\rho(\hat c_1)\geq \dim X+1$, with equality if and only if $X$ is isomorphic a projective space $\mathbb{P}^n$.
 \end{Galkin}
 \noindent Here and throughout this paper, we always consider the complex dimension. Conjecture $\mathcal{O}$  has been verified in various cases  (see \cite{HKLY} and references therein).  Galkin's lower bound conjecture was verified for few cases, including complex Grassmannians \cite{ESSSW}, Lagrangian and orthogonal Grassmannians \cite{ChHa}, and Fano complete intersections in projective spaces \cite{Ke}.

  Due to the lack of functorial properties of the quantum cohomology, individual studies of  relevant information on $QH^*(X)$ have to be taken   in general.   Nevertheless,
   when $\pi: X\to Y$ is a blow-up of Fano manifolds, {  the changing behavior of Gromov-Witten invariants under blowing up may be analyzed  via the absolute-relative correspondence} (see e.g. \cite{MaPa, HLR}). In some circumstances, the Gromov-Witten invariants for $X$ can be read off from that for $Y$ by explicit blow-up formulae \cite{Gath, Hu, Hu2, Lai}. Here we propose the following conjecture.
\begin{conjecture}\label{conj}
   Let $Y$ be a Fano manifold, and $Z$ be an irreducible smooth Fano subvariety of $Y$ with   {\upshape $\mbox{codim}Z\geq 2$}.
      If the blow-up $X=Bl_{Z}Y$ of $Y$ along $Z$ is Fano, then $\rho(\hat c_1(X))>\rho(\hat c_1(Y))$.
     \end{conjecture}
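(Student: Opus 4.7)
The plan is to approach the conjecture through the blow-up formula for genus-zero Gromov--Witten invariants, which relates $QH^*(X)$ to $QH^*(Y)$ up to exceptional contributions. Writing $\pi: X = Bl_Z Y \to Y$ with exceptional divisor $E$ and $r = \mathrm{codim}_Y Z$, one has the classical identity $c_1(X) = \pi^* c_1(Y) - (r-1)E$, and an additive decomposition $H^*(X) = \pi^* H^*(Y) \oplus \mathcal{E}$, where $\mathcal{E}$ collects the classes supported on $E$. First I would express $\hat c_1(X)$ in block form with respect to this decomposition, using the blow-up formulae of \cite{Hu, Hu2, Lai, MaPa} to control the quantum terms coupling the two summands.

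Next, I would attempt a Perron--Frobenius style comparison. Picking a positive Perron eigenvector $v_Y$ of $\hat c_1(Y)$ with respect to a Schubert-like positive basis of $QH^\bullet(Y)$, I would lift it to $\pi^* v_Y$ and perturb by a nonnegative $\mathcal{E}$-supported correction to obtain $v_X \in QH^\bullet(X)$ satisfying $\hat c_1(X) v_X \geq \rho(\hat c_1(Y)) v_X$ coordinatewise, with strict inequality in at least one coordinate. Combined with positivity of $\hat c_1(X)$ on an appropriate invariant cone (an analogue of the Schubert cone on $Y$, enlarged by $\mathcal{E}$), classical Perron--Frobenius theory would then yield $\rho(\hat c_1(X)) > \rho(\hat c_1(Y))$.

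The main obstacle, as I expect it, is the well-known failure of functoriality for the quantum product: $\pi^*$ is not a ring homomorphism on quantum cohomology, because curves in the exceptional fibres produce quantum corrections that are invisible to $Y$. Thus the $\pi^* H^*(Y)$ block of $\hat c_1(X)$ is only a deformation of $\hat c_1(Y)$, and quantifying the deformation demands substantial knowledge of Gromov--Witten invariants of $X$ with at least one insertion on $E$. A secondary but real difficulty is the negative sign in $c_1(X) = \pi^* c_1(Y) - (r-1)E$: on $\mathcal{E}$ the classical action of $\hat c_1(X)$ is \emph{decreasing}, so the strict inequality must come genuinely from new quantum contributions rather than from the classical term. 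Establishing in a uniform way that these contributions dominate the classical loss seems to require new ideas, and perhaps a better structural understanding of $\hat c_1$ for blow-ups than is presently available.

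For this reason, and in line with the paper's stated objective, the concrete route I would pursue first is direct computation in the two cases studied here, namely $Y = Q^4$ blown up at a point and along a $\mathbb{P}^2$. In both cases $X$ has small Picard and Betti numbers, the blow-up formulae determine the quantum product $\star$ completely from that of $Q^4$, and the spectral radius of $\hat c_1(X)$ can be extracted (or lower-bounded) explicitly from the matrix of $\hat c_1(X)$ in a natural basis, and then compared with $\rho(\hat c_1(Q^4))$, which is already known. The numerical patterns emerging from these two cases should in turn indicate what the right positive eigenvector and invariant cone ought to look like in a general proof of the conjecture.
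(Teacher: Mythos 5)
You should first be clear that the statement you are addressing is a \emph{conjecture}: the paper does not prove it in general, and neither do you. Your first two paragraphs sketch a general Perron--Frobenius comparison strategy and then (correctly) identify why it does not close; your last paragraph retreats to verifying the two cases $Bl_{\mathbb{P}^0}Q^4$ and $Bl_{\mathbb{P}^2}Q^4$ by direct computation, which is exactly what the paper does in Theorems \ref{thmQ4point} and \ref{thmQ4P2}. So as far as the two verified cases go, your fallback route is essentially the paper's route, and your honest assessment of the general case matches the paper's (it is left as a conjecture, supported by evidence from toric blow-ups of $\mathbb{P}^n$ and del Pezzo surfaces).

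Two concrete points where your sketch diverges from, or underestimates, what actually has to be done. First, the Perron--Frobenius comparison via a positive eigenvector $v_Y$ and an invariant cone is obstructed already in the two computed cases: in the natural bases the matrix $M$ of $\hat c_1(X_r)$ has negative entries (the classical products $E_0\cup E_0=-P_0$, etc., force this), so there is no evident $\hat c_1$-invariant positive cone. The paper has to choose special bases $\hat{\mathcal{B}}_0$, $\hat{\mathcal{B}}_2$ for which a high power of $M$ ($M^{17}$, resp.\ $M^{13}$) becomes positive in order to apply Perron's theorem for Conjecture $\mathcal{O}$; and the inequality $\rho(\hat c_1(X_r))>\rho(\hat c_1(Q^4))=4\sqrt{2}$ is \emph{not} obtained by an eigenvector sub/supersolution argument but by the elementary observation that $\det(\lambda I_9-M)|_{\lambda=4\sqrt{2}}<0$, which forces a real eigenvalue in $(4\sqrt{2},+\infty)$. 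Second, your claim that ``the blow-up formulae determine the quantum product completely from that of $Q^4$'' is an overstatement: the formulae of \cite{Hu,Hu2} only supply the invariants in classes pushing forward to $\ell$ and $2\ell$ (e.g.\ $\langle\frac12H^3,\frac12H^4\rangle_\ell$ and $\langle\frac12H^4,\frac12H^4\rangle_{2\ell-e}$). The contributions of exceptional and fiber curve classes ($e$, $\ell-e$) require the second geometric realization of each blow-up ($X_0$ as $Bl_{Q^2_0}\mathbb{P}^4$, $X_2$ as a $\mathbb{P}^2$-bundle over $\mathbb{P}^2$), together with the localization of the relevant moduli spaces to the exceptional divisor and dimension constraints, as in Lemma \ref{lemma-anotherblowupofQ4atonepoint}. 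Any attempt to push your program beyond these two cases will have to confront both issues.
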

 \noindent We start with  $\mbox{codim}Z\geq 2$, since   the blow-up of $Y$ along a smooth irreducible divisor is isomorphic to $Y$ itself.
 We   even wish to remove the assumption of the smoothness of $Z$.

  When $Y=\bP^n$, the Fano submanifold  $Z$ can be taken as $\bP^r$ for any $0\leq r \leq n-2$. The blow-up $Bl_{\bP^r}\bP^n$ is a toric  Fano manifold, and the above conjecture is equivalent to Galkin's lower bound conjecture in this case. By using mirror symmetry \cite{GaIr} and analysing the mirror  superpotential, Conjecture $\mathcal{O}$ was verified for $Bl_{\bP^r}\bP^n$ in \cite{Yang}.  Numerical computations of the critical values of the same superpotential, which coincide with the eigenvalues of $\hat c_1$, provide
   a class of evidences for the above conjecture.
   When $Y$  is a del Pezzo surface given by  the blow-up of $\mathbb{P}^2$ at $r$ points in general position with $0\leq r\leq 7$, we can consider a point $Z$  in $Y$.
The blow-up $Bl_{Z}Y$ is again a del Pezzo surface, and
     Conjecture \ref{conj} can be directly verified by numerically solving the characteristic polynomial in \cite{BaMa}. This provides another main class of evidences, which support us to formulate  Conjecture \ref{conj}. Our conjecture gives a refined lower bound $\rho(\hat c_1(Y))$ of $\rho(\hat c_1(Bl_ZY))$ for blow-ups $Bl_ZY$ than Galkin's lower bound
      $\dim Bl_ZY+1=\dim Y+1$.

 Let $Q^4$ denote a  four-dimensional smooth quadric in $\mathbb{P}^5$. To further support our conjecture, we obtain the following main result of the present paper.

 \begin{thm}\label{mainthm}
    Conjecture $\mathcal{O}$,  Conjecture \ref{conj} and Galkin's lower bound conjecture all
 hold for the blow-ups $Bl_{\mathbb{P}^0} {Q}^4$ and $Bl_{\mathbb{P}^2} {Q}^4$.
\end{thm}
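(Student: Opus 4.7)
The plan is to compute the small quantum cohomology $QH^*(X)$ for each of $X=Bl_{\mathbb{P}^0}Q^4$ and $X=Bl_{\mathbb{P}^2}Q^4$ explicitly, and then to extract the required spectral information on the operator $\hat c_1$ directly.

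First I would fix a homogeneous additive basis of $H^*(X,\mathbb{Q})$ adapted to the blow-up decomposition: the pullbacks of a basis of $H^*(Q^4)$ together with the classes supported on the exceptional divisor $E$. Both $X$ have Picard rank $2$, so the Novikov ring is $\mathbb{Q}[q_1,q_2]$---one variable for the pullback of a line in $Q^4$ and one for the fibre class of $E\to\mathbb{P}^r$. To determine the quantum multiplication I would invoke the blow-up formulae for genus-zero Gromov--Witten invariants of Gathmann, Hu and Lai, together with the absolute-relative correspondence \cite{MaPa,HLR,Gath,Hu,Hu2,Lai}: these reduce all three-point invariants of $X$ of relevant degree to computations on $Q^4$ (whose quantum cohomology is classical) and to curves contracted by $\pi\colon X\to Q^4$. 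Since only classes of low anti-canonical degree enter the quantum Chevalley-type formula for $c_1(X)\star-$, this gives a complete description of the matrix $M$ of $\hat c_1$ on $QH^\bullet(X)|_{\mathbf{q}=1}$.

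With $M$ in hand---a square matrix whose size equals the total even Betti number of $X$---the three statements become concrete questions about its spectrum. For Conjecture~$\mathcal{O}$ I would proceed as in \cite{HKLY}, identifying a block of $M$ with nonnegative entries whose Perron--Frobenius eigenvalue coincides with $\rho(\hat c_1(X))$, and checking that the remaining maximum-modulus eigenvalues are products of $\rho$ with roots of unity of the expected order. For Galkin's bound I would exhibit a real eigenvalue strictly greater than $5$, which suffices since $X\not\cong\mathbb{P}^4$. For Conjecture~\ref{conj} I would compute $\rho(\hat c_1(Q^4))$ directly from the quantum Chevalley rule on $Q^4$ and show that the eigenvalue just found exceeds it.

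The main obstacle is the first step: assembling the blow-up formulae into a complete multiplication table for $c_1(X)\star-$ and controlling the curve classes that meet $E$ without being contracted. The Fano hypothesis ensures that only finitely many curve classes contribute and should also supply enough positivity in $M$ to make the Perron--Frobenius argument run smoothly. Once the ring is presented, the spectral verification reduces to factoring an explicit characteristic polynomial and performing numerical comparisons, which is essentially a routine, if lengthy, calculation.
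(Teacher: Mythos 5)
Your overall strategy coincides with the paper's: compute the matrix $M$ of $\hat c_1$ on a suitable basis, establish Conjecture $\mathcal{O}$ by a Perron--Frobenius argument, and get both Conjecture \ref{conj} and Galkin's bound by comparing a real eigenvalue of $M$ with $\rho(\hat c_1(Q^4))=4\sqrt2$ (Rietsch's formula), using $4\sqrt2>5$. However, two steps that you defer as routine are where the actual content lies, and as described they do not go through.

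First, the blow-up formulae of Gathmann, Hu and Lai do not by themselves produce the full multiplication table. They handle invariants whose insertions are pullbacks from $Q^4$ (this is how the paper computes $\langle \tfrac12H^3,\tfrac12H^4\rangle_\ell$ and $\langle \tfrac12H^4,\tfrac12H^4\rangle_{2\ell-e}$), but the nonzero two-point invariants in the fibre-type degrees $e$ and $\ell-e$ with \emph{both} insertions exceptional, e.g.\ $\langle P_0,P_0\rangle_{\ell-e}^{X_0}$ or $\langle S_2,S_2\rangle_{e}^{X_2}$, are not covered, and "curves contracted by $\pi$" accounts only for the degrees $ke$. The paper obtains these by exhibiting a second contraction of each space ($X_0$ is also $Bl_{Q^2_0}\bP^4$ for a quadric surface $Q^2_0\subset\bP^3\subset\bP^4$, and $X_2$ is a $\bP^2$-bundle over $\bP^2$), localizing the moduli space to the relevant exceptional divisor, and evaluating an excess-intersection contribution $\mathbf{e}(R^1\pi_*ev_3^*N)$ there. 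Some such additional geometric input is needed in your step one. Second, your Perron step is underspecified: in the natural blow-up basis the matrix of $\hat c_1$ has many negative entries (already classically, $E_0\cup E_0=-P_0$), and there is no nonnegative "block" whose Perron eigenvalue is $\rho(\hat c_1)$. The paper's key device is a global change of basis (to $\hat{\mathcal B}_r$, containing classes such as $H-E_0$, $\wp_++\wp_--P_0$, $\tfrac12H^3-L_0$) after which an \emph{odd power} of $M$ becomes a strictly positive matrix; Perron's theorem applied to that power, combined with the realness of $M$ and the oddness of the exponent, gives that $\rho(M)$ is a simple eigenvalue and strictly dominates all others in modulus. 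Without producing such a basis (or an equivalent irreducibility/eventual-positivity argument), part (1) and especially part (2) of Conjecture $\mathcal{O}$ --- uniqueness of the maximal-modulus eigenvalue, which is the substantive assertion here since the Fano index is $1$ --- are not established.
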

\noindent The quadric $Q^4$ can be interpreted as the complex Grassmannian $Gr(2, 4)$ via the Pl\"ucker embedding. The blow-ups in the above theorem, together with the blow-ups $Bl_{\bP^r}\bP^n$, can be uniformly treated as   Fano manifolds that arise from the blow-up of a complex Grassmannian $Gr(k, n)$ along a complex sub-Grassmannian.
We will study the general case systemically elsewhere.

For $r\in\{0, 2\}$, that Galkin's lower bound conjecture holds for $X_r=Bl_{\mathbb{P}^r} {Q}^4$   is an immediate consequence of Conjecture \ref{conj}  (see Corollary \ref{corGalkin}). We will prove the other two conjectures in Theorem \ref{thmQ4point} and  Theorem \ref{thmQ4P2} respectively.
The proofs rely on explicit computations of the quantum multiplication by the first Chern class $c_1(X_r)$. Thanks to the divisor axiom in Gromov-Witten theory, this reduces to the study of genus zero, two-point Gromov-Witten invariants with insertions in $H^*(X_r)$. To simplify the calculations, we explore the geometric structures of the Fano manifolds by noting that $X_0$ can also be realized the blow-up of $\bP^4$ along a two-dimensional quadric in a $3$-plane of $\bP^4$, and that
$X_2$ is a $\bP^2$-bundle over $\bP^2$. Part of the Gromov-Witten invariants are therefore easily obtained from the enumerative information of $Q^4$ by making use of the blow-up formula in \cite{Hu}. The proof of Conjecture $\mathcal{O}$ also relies on a good choice of bases of $H^*(X_r)$, so that Perron's theorem \cite{Perr} on positive matrices becomes applicable.
{We remark that all the three conjectures also hold for $Bl_{\bP^1}(Q^4)$, which is a natural case to study. While the geometric structure of $Bl_{\bP^1}(Q^4)$ is more complicated than that of $Bl_{\bP^0}(Q^4)$ and $Bl_{\bP^2}(Q^4)$, determination of two-point Gromov-Witten invariants of $Bl_{\bP^1}(Q^4)$ requires ad hoc arguments with WDVV equations involved. We therefore exclude it in this paper.}

The present paper is organized as follows. In section 2, we   review basic facts of quantum cohomology and introduce Conjecture $\mathcal{O}$. In section 3, we study the quantum
 cohomology of the blow-up of $Q^4$ at a point, and prove both Conjecture \ref{conj} and
  Conjecture $\mathcal{O}$ in this case. In section 4,  we study the quantum
 cohomology of the blow-up of $Q^4$ along a plane $\bP^2$, and prove both conjectures as well.
\section{Preliminaries}
 In this section, we review some basic facts, and  refer to \cite{CoKa} and \cite{GGI} for more details.

\subsection{Conjecture $\mathcal{O}$} Let $X$ be a Fano manifold, namely a compact complex manifold $X$ with positive  first Chern class $c_1(X)$. Let $\overline{\mathcal{M}}_{0, k}(X, \mathbf{d})$ denote the moduli stack of $k$-pointed genus $0$ stable maps $(f: C\to X; p_1, p_2, \cdots, p_k)$ of class $\mathbf{d}\in H_2(X,\mathbb{Z})$, which has a coarse moduli space $\overline{M}_{0, k}(X, \mathbf{d})$.
Its   virtual fundamental class $[\overline{M}_{0, k}(X, \mathbf{d})]^{\rm virt}$   is of complex degree $\dim_{\mathbb{C}}X+\int_{\mathbf{d}} c_1(X)+k-3$ in the Chow group $A_*(\overline{ {M}}_{0, k}(X, \mathbf{d}))$.
The k-pointed, genus 0 Gromov-Witten invariants of degree $\mathbf{d}$ for   $\gamma_1, \gamma_2, \cdots, \gamma_k\in H^*(X)=H^*(X, \mathbb{Q})$ is defined by
  $$\langle  \gamma_1,\gamma_2,\cdots, \gamma_k\rangle_{\mathbf{d}}^X:=\int_{[\overline{M}_{0, k}(X, \mathbf{d})]^{\rm virt}}  ev_1^*(\gamma_i)\cup  ev_2^*(\gamma_2)\cup \cdots\cup ev_k^*(\gamma_k).$$
  Here  $ev_i$ denotes the $i$-th  evaluation map.
  Set $m:=\mbox{rank}H_2(X,\mathbb{Z})$, take any    homogeneous basis $\{\phi_i\}_{i=1}^N$ of $H^*(X)$, and let $\{\phi^i\}$ denote  the dual basis of $H^*(X)$ that satisfy $(\phi_i, \phi^j)_X=\int_X \phi_i\cup \phi^j=\delta_{i, j}$ with respect to the Poincar\'e pairing.  The (small) quantum cohomogy ring $QH^*(X)=(H^*(X)\otimes_{\mathbb{Q}} \mathbb{Q}[q_1,\cdots, q_m], \star)$ is a deformation of the classical cohomology $H^*(X)$.  The quantum product of  $\alpha, \beta \in H^*(X)$ is given by
     $$\alpha\star \beta :=\sum_{\mathbf{d}\in H_2(X,\mathbb{Z})}\sum_{i=1}^N \langle \alpha,\beta,\phi_i\rangle_{\mathbf{d}}^X\phi^i q^{\mathbf{d}}.$$
     Here  $q^{\mathbf{d}}=\prod_{j=1}^mq_j^{d_j}$ for $\mathbf{d}=(d_1,\cdots, d_m)$ with a basis of effective curve classes of $H_2(X, \mathbb{Z})$ being fixed a prior. The quantum product  is a polynomial in $\mathbf{q}$, and   is independent of choices of the basis $\{\phi_i\}_i$.

  Consider the even part of the cohomology $H^\bullet(X):=H^{\rm even}(X)$ and
  let $QH^\bullet(X):=H^\bullet(X)\otimes \mathbb{Q}[\mathbf{q}]$.
  The first Chern class  $c_1(X)$ induces a linear operator  $\hat c_1=\hat c_1(X)$ by  the evaluation of the quantum product at $\mathbf{1}:=(1,\cdots, 1)$, namely defined by
      $$\hat c_1: QH^\bullet(X)\longrightarrow QH^\bullet(X); \, \beta\mapsto c_1(X)\star \beta|_{\mathbf{q}=\mathbf{1}}.$$
 Denote by $\rho=\rho(\hat c_1(X))$ the spectral radius of   $\hat c_1$, namely
    $$\rho:=\max\{|\lambda|~:~ \lambda\in \mbox{Spec}(\hat c_1)\}\quad\mbox{where}\quad \mbox{Spec}(\hat c_1):=\{\lambda~:~ \lambda\in \bC  \mbox{ is an eigenvalue of } \hat c_1\}.$$

 \begin{conjO}[Galkin-Golyshev-Iritani]Every Fano manifold  $X$ satisfies the following.
  \begin{enumerate}
   \item $\rho\in \mbox{Spec}(\hat c_1)$ and it is of multiplicity one.
   \item For any $\lambda\in \mbox{Spec}(\hat c_1)$ with $|\lambda|=\rho$, we have $\lambda^s=\rho^s$, where $s$ is the Fano index of $X$, namely $s=\max\{k\in \bZ~:~ {c_1(x)\over k}\in H^2(X,\mathbb{Z})\}$.
 \end{enumerate}
   \end{conjO}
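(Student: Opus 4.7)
The plan is to handle the two blow-ups $X_0:=Bl_{\bP^0}Q^4$ and $X_2:=Bl_{\bP^2}Q^4$ in parallel. For each, I would (i) assemble the matrix $M_r$ of $\hat c_1(X_r)$ in a carefully chosen basis of $H^\bullet(X_r)$, (ii) apply Perron's theorem \cite{Perr} to establish Conjecture $\mathcal{O}$, and (iii) compare the spectral radius of $M_r$ with $\rho(\hat c_1(Q^4))$ to establish Conjecture \ref{conj}. Galkin's lower bound then follows: since $Q^4\cong Gr(2,4)$ is not a projective space, \cite{ESSSW} gives $\rho(\hat c_1(Q^4))>\dim Q^4+1=5$, and Conjecture \ref{conj} promotes this to $\rho(\hat c_1(X_r))>5=\dim X_r+1$ (strict because $X_r\not\cong \bP^4$); Corollary \ref{corGalkin} should package exactly this step.

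The entries of $M_r$ reduce, by the divisor axiom, to two-point genus-zero Gromov-Witten invariants $\langle \phi_i,\phi_j\rangle_{\mathbf{d}}^{X_r}$. For $X_0$ I would use the basis obtained from the pullback of a Schubert basis of $H^*(Q^4)$ together with powers of the exceptional divisor $E_0$; for $X_2$, the monomials in the hyperplane class of the base $\bP^2$ and the tautological class of the projective bundle $X_2\to \bP^2$. Most of the required invariants are then computable by Hu's blow-up formula \cite{Hu}: for $X_0$ the formula transports invariants in pulled-back classes from $Q^4$, while the remaining invariants, those involving the exceptional locus nontrivially, can be extracted from the alternative realization of $X_0$ as the blow-up of $\bP^4$ along a smooth quadric surface in a hyperplane, feeding the formula a second time from the known enumerative geometry of $\bP^4$. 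For $X_2$, the $\bP^2$-bundle structure together with the blow-up formula and standard projective-bundle calculations should yield the full table of invariants.

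With $M_r$ in hand, Conjecture $\mathcal{O}$ is verified by first arranging the basis so that $M_r$ has nonnegative entries and then checking that some power of $M_r$ is strictly positive, i.e.\ that $M_r$ is primitive. Perron's theorem then gives that $\rho(\hat c_1(X_r))$ is a simple eigenvalue; the statement $\lambda^s=\rho^s$ for every eigenvalue $\lambda$ on the spectral circle (with $s$ the Fano index of $X_r$) is checked by factoring the characteristic polynomial of $M_r$, or equivalently by examining the block-cyclic structure of $M_r^s$. Conjecture \ref{conj} then reduces to a numerical comparison between $\rho(\hat c_1(X_r))$, read off from this characteristic polynomial, and the known value of $\rho(\hat c_1(Q^4))$.

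The main obstacle is the explicit determination of the higher-degree two-point invariants that populate $M_r$: Hu's formula handles only those invariants whose curves avoid subtle interaction with the exceptional locus, and the remaining invariants demand either the secondary geometric description (for $X_0$) or careful fiber-bundle arguments (for $X_2$). The authors' comment that $Bl_{\bP^1}Q^4$ is excluded because its invariants need WDVV to close the system underscores the point: $X_0$ and $X_2$ are precisely the cases whose geometry is rich enough to make the invariant table self-contained, so the entire difficulty is concentrated in that bookkeeping rather than in a deeper structural obstruction. A secondary technical issue is to engineer the basis so that $M_r$ (and a suitable power of it) has the positivity needed for Perron's theorem to apply cleanly, a requirement that in general constrains the choice of generators for $H^\bullet(X_r)$.
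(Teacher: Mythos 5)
Your proposal follows essentially the same route as the paper: compute the two-point invariants via Hu's blow-up formulae together with the second geometric realization of $X_0$ (as $Bl_{Q^2_0}\bP^4$) and the $\bP^2$-bundle structure of $X_2$, assemble the matrix of $\hat c_1$ in a hand-picked basis, invoke Perron, and compare the resulting spectral radius with $\rho(\hat c_1(Q^4))=4\sqrt{2}$. One caveat: you plan to ``arrange the basis so that $M_r$ has nonnegative entries'' and then argue via primitivity, but the paper's matrices (in the bases $\hat{\mathcal B}_0$, $\hat{\mathcal B}_2$) do contain negative entries, and it is not clear such a nonnegative basis exists; the actual argument applies Perron's theorem directly to an \emph{odd} positive power $M^{17}$ (resp.\ $M^{13}$), and then uses realness of $M$ plus oddness of the exponent to pull simplicity and strict dominance of the spectral radius back to $M$ itself --- you should build that step in rather than relying on primitivity of $M$. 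Also, part (2) of Conjecture $\mathcal O$ is immediate here because the Fano index of both blow-ups is $1$ (e.g.\ $c_1(X_0).(\ell-e)=1$), so no factorization of the characteristic polynomial is needed.
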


\subsection{Quantum cohomology of $Q^4$}

Let $Q^4$ be a four-dimensional smooth quadric in $\bP^5$.
The quadric $Q^4$ can be interpreted as the image of the complex Grassmannian $Gr(2, 4)=\{V\leqslant \mathbb{C}^4\mid \dim V=2\}$ via the Pl\"ucker embedding
$Pl: Gr(2, 4)\hookrightarrow \bP^5$. Therefore the integral cohomology $H^*(Q^4, \mathbb{Z})\cong H^*(Gr(2, 4), \mathbb{Z})=\bigoplus_{2\geq a\geq b\geq 0}\mathbb{Z}\sigma^{(a, b)}$ has a $\mathbb{Z}$-basis of Schubert classes
 $\sigma^{(a, b)}$.

 Under the above identification,   the hyperplane class $H:=\sigma^{(1,0)} \in H^2(Q^4, \bZ)$ is the pullback of the hyperplane class in $H^2(\bP^5, \bZ)$. Moreover, $\wp:=\sigma^{(2,0)}-\sigma^{(1,1)}\in H^4(Q^4, \mathbb{Z})$ is a primitive class. Then the basis $\mathcal{B}$ of $H^*(Q^4, \bZ)$ of Schubert classes  can be written as
 	\[
	\mathcal{B}=\{\mathbf{1},H,\wp_+,\wp_-,\frac{1}{2}H^3,\frac{1}{2}H^4\},	
	\]
 in which $\wp_+:=\frac{1}{2}(H^2+\wp)=\sigma^{(2,0)}$,     $\wp_-:=\frac{1}{2}(H^2-\wp)=\sigma^{(1,1)}$,
  $\frac{1}{2}H^3=\sigma^{(2,1)}$ and $\frac{1}{2}H^4=\sigma^{(2,2)}$.
Geometrically, $Q^4$ has two rulings by planes. Then $\wp_+$ and $\wp_-$ are respectively given by their   cohomology classes, namely the Poincar\'e dual of their homology classes.     The cohomology class of a line is $\frac{1}{2}H^3$, and the cohomology class of a point is $\frac{1}{2}H^4$.  The dual basis $\mathcal{B}^\vee$ of $\mathcal{B}$ with respect to the Poincar\'e pairing is given by
	\[
	\mathcal{B}^\vee=\{\frac{1}{2}H^4,\frac{1}{2}H^3,\wp_+,\wp_-,H,\mathbf{1}\}.
	\]

Let $\ell\in H_2(Q^4,\bZ)$ be the homology class of a line. The non-zero three-point Gromov-Witten invariants with insertions in $\mathcal{B}$ can be directly read off from a table of quantum product of Schubert classes (see e.g. \cite[\S 8.2]{Miha}), and  are given as follows where  $\theta\in\{\wp_+,\wp_-\}$.
\begin{align*}
		\langle H^{0},H^{0},\frac{1}{2}H^{4}\rangle^{Q^4}_0&=1, &\langle H,\frac{1}{2}H^3, \frac{1}{2}H^{4}\rangle^{Q^4}_\ell&=1,\\
		\langle H^{0},H,\frac{1}{2}H^{3}\rangle^{Q^4}_0&=1,&\langle\wp_+,\wp_-, \frac{1}{2}H^{4}\rangle^{Q^4}_\ell&=1, \\
		\langle H^{0},\theta,\theta\rangle^{Q^4}_0&=1,&\langle\theta, \frac{1}{2}H^3, \frac{1}{2}H^{3}\rangle^{Q^4}_\ell&=1,\\
		\langle H,H,\theta,\rangle^{Q^4}_0&=1, &\langle \frac{1}{2}H^{4},\frac{1}{2}H^{4},\frac{1}{2}H^{4}\rangle_{2\ell}^{Q^4}&=1.
	\end{align*}
Closed formula for the spectral radius of  the linear operators induced by quantum multiplication by Schubert classes on $QH^\bullet(Gr(k,n))$
 has been given by Rietsch \cite{Riet} (see also \cite{ESSSW, LSYZ} for the formula). In particular for $Q^4=Gr(2,4)$, we have $c_1(Q^4)=4H=4\sigma^{(1,0)}$ and the following property.
\begin{prop}\label{proprhoQ}
   The spectral radius of $\hat c_1(Q^4)$ is equal to $4{\sin{2 \pi\over 4}\over \sin{\pi\over 4}}=4\sqrt{2}$.
\end{prop}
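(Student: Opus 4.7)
The plan is to assemble the matrix $M$ of $H\star|_{\mathbf{q}=1}$ in the basis $\mathcal{B}$ from the Gromov-Witten data tabulated just above, and then read off its spectrum. Since $c_1(Q^4)=4H$ and $Q^4$ has Picard rank one (so there is only a single quantum parameter $q$), the eigenvalues of $\hat c_1(Q^4)$ will be exactly four times those of $M$.

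First I would compute $H\star\beta$ for every $\beta\in\mathcal{B}$. The classical ($\mathbf{d}=0$) parts are Pieri in $Gr(2,4)$. For the quantum corrections, a virtual dimension count on $\overline{M}_{0,3}(Q^4,d\ell)$ (complex dimension $4+4d$) shows that only $d\in\{0,1\}$ can contribute, because the three inserted classes have total complex degree at most $1+4+4=9$. Combining this with the non-zero invariants listed above should give
\begin{align*}
  H\star\mathbf{1}&=H, & H\star H&=\wp_++\wp_-,\\
  H\star\wp_+&=\tfrac{1}{2}H^3, & H\star\wp_-&=\tfrac{1}{2}H^3,\\
  H\star\tfrac{1}{2}H^3&=\tfrac{1}{2}H^4+q\mathbf{1}, & H\star\tfrac{1}{2}H^4&=qH.
\end{align*}

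Next I would set $q=1$ and diagonalise. The columns of $M$ for $\wp_+$ and $\wp_-$ coincide, so $\wp_+-\wp_-$ lies in $\ker(\hat H|_{q=1})$ and produces a zero eigenvalue immediately. On the five-dimensional complementary invariant subspace spanned by $\{\mathbf{1},H,\wp_++\wp_-,\tfrac{1}{2}H^3,\tfrac{1}{2}H^4\}$, one cofactor expansion (or a short elimination in the eigen-equation) should yield the characteristic polynomial $\lambda(\lambda^4-4)$. Hence the full spectrum of $\hat H|_{q=1}$ is $\{0,0,\pm\sqrt{2},\pm i\sqrt{2}\}$, and $\rho(\hat c_1(Q^4))=4\sqrt{2}$, in agreement with the specialization of the Rietsch formula $n\sin(k\pi/n)/\sin(\pi/n)$ at $(k,n)=(2,4)$.

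There is no real obstacle in this approach: the argument is bookkeeping against the stated Gromov-Witten table, and contributions from $d\ge 2$ are killed outright by the virtual dimension count. If one prefers to skip the explicit computation, the proposition is also an immediate specialization of Rietsch's closed formula \cite{Riet} for the spectral radius of quantum multiplication by Schubert classes on $Gr(k,n)$.
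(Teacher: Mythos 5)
Your computation is correct, but it takes a different route from the paper: the paper proves this proposition purely by citation, invoking Rietsch's closed formula $n\sin(k\pi/n)/\sin(\pi/n)$ for the Frobenius--Perron eigenvalue of $\hat\sigma^{(1,0)}$ on $QH^\bullet(Gr(k,n))$ and specializing to $(k,n)=(2,4)$, whereas you diagonalise $\hat H|_{q=1}$ by hand from the Gromov--Witten table. Your bookkeeping checks out: the dimension constraint $1+4+4\le 4+4d$ does kill all $d\ge 2$ contributions, the six products $H\star\beta$ are as you list them (in particular $H\star\tfrac12H^3=\tfrac12H^4+q\mathbf{1}$ and $H\star\tfrac12H^4=qH$ from $\langle H,\tfrac12H^3,\tfrac12H^4\rangle_\ell=1$), the class $\wp_+-\wp_-$ is killed, and on the complementary subspace the eigenvalue equation forces $\lambda^4=4$, giving spectrum $\{0,0,\pm\sqrt2,\pm i\sqrt2\}$ for $\hat H$ and hence $\rho(\hat c_1)=4\sqrt2$ after scaling by $4$ (legitimate, since $\star$ is linear in the first slot). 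The trade-off: the citation is a one-liner and generalizes to all $Gr(k,n)$, while your direct computation is self-contained, yields the full spectrum with multiplicities (not just the radius), and in fact already verifies Conjecture $\mathcal{O}$ for $Q^4$ itself as a by-product --- information the Rietsch formula alone does not give. Either argument is acceptable here.
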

\begin{cor}\label{corGalkin}
   If Conjecture \ref{conj} holds for a Fano blow-up $Bl_Z(Q^4)$ of $Q^4$ along a subvariety $Z\subset Q^4$, so does Galkin's lower bound conjecture.
\end{cor}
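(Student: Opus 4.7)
The plan is to deduce Galkin's conjecture directly by chaining the strict inequality from Conjecture \ref{conj} with the explicit value of $\rho(\hat c_1(Q^4))$ provided by Proposition \ref{proprhoQ}, and then observing that $4\sqrt{2}$ already comfortably exceeds the dimension-dependent threshold $\dim X + 1 = 5$.

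More concretely, write $X = Bl_Z(Q^4)$, which is a Fano fourfold, so that Galkin's conjecture for $X$ reads
\[
\rho(\hat c_1(X)) \geq \dim X + 1 = 5,
\]
with equality only if $X \cong \mathbb{P}^4$. First I would invoke Conjecture \ref{conj}, which, under the hypothesis, yields
\[
\rho(\hat c_1(X)) > \rho(\hat c_1(Q^4)).
\]
By Proposition \ref{proprhoQ}, the right-hand side equals $4\sqrt{2}$. Since $4\sqrt{2} = \sqrt{32} > \sqrt{25} = 5$, combining these gives the strict inequality $\rho(\hat c_1(X)) > 5 = \dim X + 1$, so the numerical lower bound in Galkin's conjecture is established (in fact, strictly).

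For the equality clause, the strictness itself forces $X \not\cong \mathbb{P}^4$: indeed, if $X$ were isomorphic to $\mathbb{P}^4$, then the eigenvalues of $\hat c_1(\mathbb{P}^4)$ would be $5 \zeta$ as $\zeta$ ranges over the fifth roots of unity, giving $\rho(\hat c_1(\mathbb{P}^4)) = 5$, which contradicts $\rho(\hat c_1(X)) > 5$. Hence the biconditional in Galkin's statement holds vacuously on both sides for $X$, completing the deduction. There is no real obstacle here; the whole corollary rests on the single numerical observation $4\sqrt{2} > 5$ together with the two results already established for $Q^4$.
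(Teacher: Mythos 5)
Your argument is exactly the paper's: chain the strict inequality from Conjecture \ref{conj} with $\rho(\hat c_1(Q^4))=4\sqrt{2}$ from Proposition \ref{proprhoQ} and observe $4\sqrt{2}>5=\dim Bl_Z(Q^4)+1$. The extra remark on the equality clause is a harmless elaboration of what the paper leaves implicit, so the proposal is correct and essentially identical to the published proof.
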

\begin{proof}
   $\rho(\hat c_1(Bl_ZQ^4))>\rho(\hat c_1(Q^4)=4\sqrt{2}>4+1=\dim(Bl_ZQ^4)+1$.
\end{proof}
\section{Quantum cohomology of $Bl_{\rm \mathbb{P}^0}Q^4$}
In this section, we will study the quantum cohomology of the blow-up $X_0=Bl_{\rm \mathbb{P}^0}Q^4$ of $Q^4$ at one point, and verify Conjecture $\mathcal{O}$ and Conjecture \ref{conj} for $X_0$.

\subsection{Geometric construction}

We will review a geometric construction of $X_0$ (see e.g. \cite[\S 22]{Harr}), which can be interpreted as both the blow-up of $Q^4$ at a point and the blow-up of $\mathbb{P}^4$ along a two-dimensional quadric $Q_0^2$ in a 3-plane of $\mathbb{P}^4$. Both interpretations will be used to compute the relevant 2-pointed genus zero Gromov-Witten invariants of $X_0$.

 Fix $x\in Q^4$ and a $4$-plane $\bP^4_0$ in $\bP^5$, such that $x\notin \bP^4_0$. The rational map $f: \bP^5\dashrightarrow \bP^4_0$, given by the projection from $x$ to $\bP^4_0$, defines a graph $\Gamma_f\subset\bP^5\times \bP^4_0$ by the Zariski closure of $\{(y, f(y))\mid y\in \bP^5\setminus\{x\}\}$.
 There are  two natural projections $\Gamma_f\xrightarrow{p_1}\bP^5$ and $\Gamma_f\xrightarrow{p_2}\bP^4_0$. The morphism $p_1$ is the blow-up of $\bP^5$ at $x$; the morphism $p_2$ endows the graph  with the $\bP^1$-bundle structure $\Gamma_f=\bP_{\bP^4_0}(\cO_{\bP^4_0}(1)\oplus\cO_{\bP^4_0})$, which can be viewed as the projective compactification of
\(
\bP^5\setminus\{x\}=N_{\bP^4_0|\bP^5}\cong\cO_{\bP^4_0}(1).
\)
\begin{lemma}\label{lemS}
	For any smooth subvariety $S$ in $\bP^4_0$, we have
	\[
	p_2^{-1}(S)\cong\bP_S(\cO_S(1)\oplus\cO_S),
	\]
	where $\cO_S(1)$ is the pullback of the hyperplane line bundle $\cO_{\bP^4_0}(1)$.
\end{lemma}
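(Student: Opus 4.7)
The plan is to reduce the lemma to the fact that projectivization of a vector bundle commutes with arbitrary base change. From the setup introduced just before the lemma, $p_2:\Gamma_f\to\bP^4_0$ is already presented as the $\bP^1$-bundle $\bP_{\bP^4_0}(\mathcal{E})$ with $\mathcal{E}:=\cO_{\bP^4_0}(1)\oplus\cO_{\bP^4_0}$, and $p_2^{-1}(S)$ is by construction the scheme-theoretic fiber product $\Gamma_f\times_{\bP^4_0} S$.

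First I would invoke the standard base-change identity for projective bundles: for any morphism $i:S\to\bP^4_0$, one has a canonical isomorphism
\[
\bP_{\bP^4_0}(\mathcal{E})\times_{\bP^4_0} S\;\cong\;\bP_S(i^*\mathcal{E}).
\]
Applied to the inclusion $i:S\hookrightarrow \bP^4_0$, this gives $p_2^{-1}(S)\cong \bP_S(i^*\mathcal{E})$.

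Next I would identify $i^*\mathcal{E}$. Since pullback of coherent sheaves commutes with direct sums, $i^*\mathcal{E}=i^*\cO_{\bP^4_0}(1)\oplus i^*\cO_{\bP^4_0}=\cO_S(1)\oplus\cO_S$, where the first summand is exactly the $\cO_S(1)$ declared in the statement (the pullback of the hyperplane bundle), and the second is the structure sheaf. Combining the two identifications yields the claimed isomorphism $p_2^{-1}(S)\cong\bP_S(\cO_S(1)\oplus\cO_S)$.

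There is essentially no obstacle; the lemma is a direct consequence of base change for projective bundles once one records the explicit bundle description of $\Gamma_f$. The smoothness hypothesis on $S$ plays no role in the identification of $p_2^{-1}(S)$—it is presumably imposed for later use, ensuring that $p_2^{-1}(S)$ is itself smooth when $S$ is specialized to $Q_0^2$ or one of its linear subvarieties, so that one can freely intersect it with the exceptional divisor and with proper transforms in the subsequent Gromov–Witten computations.
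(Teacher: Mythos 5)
Your proof is correct, and it reaches the conclusion by a somewhat different route than the paper. The paper's proof first identifies the image $p_1(p_2^{-1}(S))$ as the cone $\overline{xS}\subset\bP^5$ with vertex $x$, and then observes that $p_2^{-1}(S)$ is the projective compactification of $\overline{xS}\setminus\{x\}=N_{\bP^4_0|\bP^5}|_S\cong\cO_S(1)$, which yields the asserted $\bP^1$-bundle structure. You instead take the bundle description $\Gamma_f=\bP_{\bP^4_0}(\cO_{\bP^4_0}(1)\oplus\cO_{\bP^4_0})$ recorded just before the lemma and apply base change for projective bundles along $S\hookrightarrow\bP^4_0$, using that $p_2^{-1}(S)$ is the fiber product $\Gamma_f\times_{\bP^4_0}S$ (legitimate, since $p_2$ is a $\bP^1$-bundle, hence flat, so the scheme-theoretic preimage is the fiber product). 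Both arguments rest on the same input, namely the identification of $\Gamma_f$ as $\bP(\cO_{\bP^4_0}(1)\oplus\cO_{\bP^4_0})$; yours is the more formal and general one and avoids any geometric identification, while the paper's version records the extra geometric fact that $p_1(p_2^{-1}(S))$ is the cone $\overline{xS}$, a description that is reused later (for instance to see that $\pi_1(\pi_2^{-1}(l_\pm))$ is a $2$-plane contained in $Q^4$). Your closing remark is also accurate: smoothness of $S$ plays no role in the isomorphism itself and is only relevant for the later applications.
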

\begin{proof}
  $p_1(p_2^{-1}(S))$ is the cone $\overline{xS}$ in $\bP^5$ over $S$ with vertex $x$.  The statement follows by noting  that
   the subvariety $p_2^{-1}(S)$ of $\Gamma_f$ is the projective compactification of
\[
\overline{xS}\setminus\{x\}=N_{\bP^4_0|\bP^5}|_S\cong\cO_{\bP_0^4}(1)|_{S}=\cO_S(1).
\]
\end{proof}

Let  $X_0\subset\bP^5\times\bP_0^1$ be the strict transform of $Q^4$, that is, $X_0$ is the Zariski closure of $p_1^{-1}(Q^4\setminus\{x\})$.
Consider the restrictions
$$\pi_1=p_1|_{X_0}: X_0\to Q^4,\quad \pi_2=p_2|_{X_0}: X_0\to \bP^4.$$
\noindent The morphism $\pi_1$ is the blow-up of $Q^4$ at $x$, and its exceptional divisor is given by
\[
E_0:=X_0\cap D_\infty,\]
where $D_\infty:=\bP_{\bP^4_0}(\cO_{\bP^4_0}(1)\oplus\{0\})$ is the exceptional divisor of the blow-up $p_1$.

The tangent hyperplane $H_x$ of $Q^4$ at $x$, consisting of the tangent lines in $T_x\bP^5|_{Q^4}$, is naturally viewed as   a $4$-plane in $\bP^5$.
Noting that $E_0$ is a $3$-plane in $D_\infty\cong\bP^4$, we have
\begin{lemma}
	$\pi_2(E_0)$ is  a $3$-plane in $\bP^4_0$. Moreover, $H_x\cap \bP^4_0=\pi_2(E_0)$.
\end{lemma}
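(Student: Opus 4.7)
The plan is to establish both statements simultaneously via the equality $\pi_2(E_0) = H_x \cap \bP^4_0$. The first assertion is then automatic: $H_x$ and $\bP^4_0$ are distinct hyperplanes in $\bP^5$ (the former passes through $x$, the latter does not by hypothesis), so their intersection has codimension two in $\bP^5$ and is therefore a $3$-plane sitting inside $\bP^4_0$.

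To prove the equality, I would first unwind the identifications produced by the two descriptions of $\Gamma_f$. Because $D_\infty = \bP_{\bP^4_0}(\cO_{\bP^4_0}(1)\oplus\{0\})$ is one of the two canonical sections of the $\bP^1$-bundle $p_2$, the restriction $p_2|_{D_\infty}: D_\infty \to \bP^4_0$ is an isomorphism. Under the blow-up description $D_\infty = p_1^{-1}(x) = \bP(T_x\bP^5)$, this isomorphism sends a tangent direction $[v]$ at $x$ to the unique point where the projective line through $x$ in direction $[v]$ meets $\bP^4_0$, i.e.\ it is the projection $f$ read on directions. Compatibly, the exceptional divisor of $\pi_1$ is $E_0 = X_0\cap D_\infty$, which sits inside $D_\infty = \bP(T_x\bP^5)$ as the linear hyperplane $\bP(T_xQ^4)$.

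With these identifications the equality $\pi_2(E_0) = H_x \cap \bP^4_0$ is a direct verification of two inclusions. If $y\in\pi_2(E_0)$, then $y$ lies on a line through $x$ whose direction belongs to $T_xQ^4$, so this line is tangent to $Q^4$ at $x$ and therefore lies in the projective tangent hyperplane $H_x$; hence $y\in H_x\cap\bP^4_0$. Conversely, if $y\in H_x\cap\bP^4_0$, then $\overline{xy}\subset H_x$, so its direction lies in $\bP(T_xQ^4)$ and produces a point of $E_0$ whose $\pi_2$-image is $y$.

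The only point requiring a moment of care is the compatibility between the section-of-$p_2$ identification $D_\infty\cong\bP^4_0$ and the identification $\bP(T_x\bP^5)\cong\bP^4_0$ coming from projection from $x$. But this compatibility is built into the very construction of $\Gamma_f$ as simultaneously the blow-up of $\bP^5$ at $x$ and the projective compactification of $N_{\bP^4_0|\bP^5}\cong\cO_{\bP^4_0}(1)$, so no genuinely new work is required. I therefore expect no serious obstacle; the lemma is essentially a bookkeeping argument with the two descriptions of $\Gamma_f$.
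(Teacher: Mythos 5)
Your proof is correct and follows essentially the same route as the paper: both hinge on identifying $\pi_2|_{E_0}$ with the projection from $x$ on tangent directions and then verifying the two inclusions $\pi_2(E_0)\subseteq H_x\cap\bP^4_0$ and $H_x\cap\bP^4_0\subseteq\pi_2(E_0)$ via tangent lines through $x$. The only (harmless) difference is that you deduce the first assertion from the equality of the two $3$-planes, whereas the paper gets it directly from $E_0$ being a $3$-plane in $D_\infty\cong\bP^4$ mapped isomorphically onto $\bP^4_0$.
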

\begin{proof}
The first statement follows directly from the construction of $X_0$. Indeed, for $w\in E_0$, let $l_w$ be the tangent line of $Q^4$ at $x$ with tangent direction given by $w$. Note that $l_w$ is a line in $\bP^5$ not contained in $\bP^4_0$, and $\pi_2(w)$ is the unique intersection point of $l_w$ and  $\bP^4_0$.

On one hand, for $y\in H_x\cap \bP^4_0$, the line $\overline{xy}$ is a tangent line of $Q^4$ at $x$, and $\overline{xy}\cap \bP^4_0=\{y\}$, which implies that $y\in\pi_2(E_0)$. On the other hand, for $w\in E_0$,   we have $l_w\subset H_x$ and  hence $\pi_2(w)\in H_x\cap \bP^4_0$. Therefore the second statement follows.
\end{proof}

The quadric $3$-fold $H_x\cap Q^4$ in $H_x$ 
 is a cone with vertex $x$ over
\[Q^2_0:=H_x\cap Q^4\cap\bP^4_0=Q^4\cap\pi_2(E_0).\]
The surface $Q^2_0$ is a smooth quadric in the $3$-plane $\pi_2(E_0)$.  By the definition of $Q^2_0$, for any $y\in Q^2_0$, the line $\overline{xy}$ is contained in $Q^4$.
This implies $\pi_2^{-1}(Q^2_0)=p_2^{-1}(Q^2_0)$. Thus by   Lemma \ref{lemS},   we have
\[
E':=\pi_2^{-1}(Q^2_0)\cong\bP_{Q_0^2}(\cO_{Q_0^2}(1)\oplus\cO_{Q^2_0}).
\]

The morphism  $\pi_2: X_0\to\bP^4_0$ is actually the blow-up of $\bP^4_0$ along $Q^2_0$ \cite{Harr}, with the exceptional divisor $E'$. Under the above identification, the intersection $E'\cap E_0$ is given by
\[
Q^2_\infty:=\bP_{Q_0^2}(\cO_{Q_0^2}(1)\oplus\{0\}),
\]
which is a smooth quadric in $E_0\cong\bP^3$.

\subsection{Classical cohomology} Arising from the blow-up $\pi_1: X_0\to Q^4$, the  integral cohomology   $H^*(X_0, \mathbb{Z})$ has a basis $\mathcal{B}_0$ together with its dual basis $\mathcal{B}_0^\vee$ with respect to the Poincar\'e pairing, coming from
  $\pi_1^*(H^*(Q^4, \mathbb{Z}))$ and the exceptional classes. Let $P_0$ be a plane in the exceptional divisor $E_0\cong \bP^3$, and   $L_0$ be  a line in $E_0$. Precisely, we have
\begin{align*}
   \mathcal{B}_0&:=\{\mathbf{1},H,\wp_+,\wp_-,\frac12H^3,\frac12H^4,E_0,P_0,L_0\},\\
   \mathcal{B}_0^\vee&:=\{\frac{1}{2}H^4,\frac{1}{2}H^3,\wp_+,\wp_-,H,\mathbf{1},
-L_0,-P_0,-E_0.\}.
\end{align*}
 Here by abuse of notation, we have simply denoted the pullback of a class $\alpha\in H^*(Q^4, \mathbb{Z})$ by $\alpha$, and simply denote    the cohomology class of    a subvariety $S$ of $X_0$ by $S$ as well.

By direct calculations, the non-trivial products on $\mathcal B_0$ are given as follows, where $\theta\in\{\wp_+, \wp_-\}$.
\begin{align*}
   H\cup H&=\wp_++\wp_-,& E_0\cup E_0&=-P_0,\\
   H\cup\theta&=\frac12H^3,&  E_0\cup P_0&=-L_0,\\
   H\cup\frac12H^3&= \frac12H^4,& E_0\cup L_0&=-\frac12H^4,\\
   \theta\cup\theta&=\frac{1}{2}H^4,&P_0\cup P_0&=-\frac12H^4.
\end{align*}

Note that $Q^2_0\cong \bP^1\times \bP^1$ has two rulings by lines. We fix two lines $l_+,l_-\subset Q^2_0$ in different rulings, and denote by $S'_\pm$ the class of $\pi_2^{-1}(l_\pm)$. Let $F'$ be the class of a fiber of $E'\cong\bP_{Q_0^2}(\cO_{Q_0^2}(1)\oplus\cO_{Q^2_0})$. Let $H'$ be the pullback of the hyperplane class in $\bP^4_0$ of $\pi_2^*$. Arising from the blow-up $\pi_2: X_0\to \bP^4_0$,     $H^*(X_0, \mathbb{Z})$ has a basis $\mathcal{B}_0'$ together with its dual basis $\mathcal{B}_0'^\vee$,
given by
\begin{align*}
   \mathcal{B}_0'&=\{\mathbf1,H',H'^2,H'^3,H'^4,E',S'_+,S'_-,F'\},\\
    \mathcal{B}_0'^\vee&=\{H'^4,H'^3,H'^2,H',\mathbf1,-F',-S'_-,-S'_+,-E'\}.
\end{align*}
 \subsection{Comparison of two bases of $H^*(X_0)$}
In this subsection, we  compare the two bases $\mathcal{B}_0$ and $\mathcal{B}'_0$ of $H^*(X_0, \mathbb{Z})$. We observe that $\frac12H^4=H'^4$.

\subsubsection{Comparison in $H^2(X_0)$ and $H^6(X_0)$}

Let $e\in H_2(X_0,\bZ)$ be the homology class of a line in $E_0\cong\bP^3$. Denote also by $\ell$ the Poincar\'e dual of $\frac{1}{2}H^3$, whose image via $\pi_1{}_*$ is
 the  class of a line in  $Q^4$. Then $e$ and $\ell$ form a basis of $H_2(X_0,\bZ)$, and we have
\[
\left\{\begin{array}{ccc}
H.\ell&=&1,\\
H.e&=&0,
\end{array}\right.\mbox{ and }
\left\{\begin{array}{ccc}
E_0.\ell&=&0,\\
E_0.e&=&-1.
\end{array}\right.
\]
Moreover, the classes $e$ and $\ell-e$ generate the Mori cone of $X_0$, and $X_0\xrightarrow{\pi_1}Q^4$ is the contraction associated to the extremal ray $\bR_{\geq0}e$.

Let $e'\in H_2(X_0,\bZ)$ be the homology class of a fiber of $E'$, 
and $\ell'\in H_2(X_0,\bZ)$ be the Poincar\'e dual of
 $H'^3$. 
 Then $e'$ and $\ell'$ form another basis of $H_2(X_0,\bZ)$, and we have
\[
\left\{\begin{array}{ccc}
H'.\ell'&=&1,\\
H'.e'&=&0,
\end{array}\right.\mbox{ and }
\left\{\begin{array}{ccc}
E'.\ell&=&0,\\
E'.e&=&-1.
\end{array}\right.
\]

\begin{lemma}
$e'=\ell-e$.
\end{lemma}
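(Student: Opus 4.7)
The plan is to verify $e' = \ell - e$ in $H_2(X_0, \bZ)$ by pairing both sides against a basis of $H^2(X_0, \bZ)$. Since $\pi_1 : X_0 \to Q^4$ is the blow-up at a point and $H^2(Q^4, \bZ) = \bZ H$, the group $H^2(X_0, \bZ)$ is freely generated by $H$ and $E_0$, and the intersection relations already listed show that $\{\ell, -e\}$ is the dual basis of $\{H, E_0\}$ under the Poincar\'e pairing. Hence it suffices to check that $H \cdot e' = 1$ and $E_0 \cdot e' = 1$.

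For $H \cdot e'$, the key step is to identify the $\pi_1$-image of a fiber $e'$ of $E' \to Q_0^2$ over a point $q \in Q_0^2$. By Lemma \ref{lemS}, $\pi_2^{-1}(Q_0^2) = p_2^{-1}(Q_0^2)$ is the $\bP^1$-bundle $\bP_{Q_0^2}(\cO_{Q_0^2}(1) \oplus \cO_{Q_0^2})$, and its fiber over $q$ corresponds in $\Gamma_f$ to the proper transform of the line $\overline{xq} \subset \bP^5$ under $p_1$. Because $q \in Q_0^2 \subset H_x \cap Q^4$, the line $\overline{xq}$ lies inside $Q^4$, so $\pi_1$ restricts to an isomorphism from $e'$ onto $\overline{xq}$, giving $\pi_1{}_*(e') = \ell$. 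The projection formula then yields $H \cdot e' = \pi_1^* H \cdot e' = H \cdot \pi_1{}_*(e') = 1$.

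For $E_0 \cdot e'$, the plan is to use the identification $E_0 \cap E' = Q_\infty^2 = \bP_{Q_0^2}(\cO_{Q_0^2}(1) \oplus \{0\})$, which is a section of $E' \to Q_0^2$. Restricting $\cO_{X_0}(E_0)$ to $E'$ gives $\cO_{E'}(Q_\infty^2)$, whose degree on each fiber equals $1$ because $Q_\infty^2$ is a section. Thus $E_0 \cdot e' = \deg \cO_{X_0}(E_0)|_{e'} = 1$, and combining both numerical values with the dual-basis relations yields $e' = \ell - e$.

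The only step with real geometric content is the identification of $e'$ with the proper transform of the line $\overline{xq}$ in $Q^4$; it rests on the containment $\overline{xq} \subset Q^4$, which is guaranteed by $Q_0^2 \subset H_x \cap Q^4$. All the remaining steps are routine intersection theory, so I expect no serious obstacle.
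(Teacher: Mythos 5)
Your proof is correct, but it takes a genuinely different route from the paper's. The paper proves $e'=\ell-e$ in one line from the Mori-cone description stated just before the lemma: the two extremal rays of $\overline{NE}(X_0)$ are $\bR_{\geq0}e$ and $\bR_{\geq0}(\ell-e)$, the two birational contractions $\pi_1$ and $\pi_2$ correspond to distinct extremal rays, and since $e'$ spans the ray contracted by $\pi_2$ while $e$ spans the one contracted by $\pi_1$, the primitive class $e'$ must be $\ell-e$. You instead pair $e'$ against the basis $\{H,E_0\}$ of $H^2(X_0,\bZ)$, using that $\{\ell,-e\}$ is its dual basis, and compute $H\cdot e'=1$ (via the identification of a fiber of $E'\to Q_0^2$ over $q$ with the proper transform of the line $\overline{xq}\subset Q^4$, which uses $Q_0^2\subset H_x\cap Q^4$) and $E_0\cdot e'=1$ (since $Q^2_\infty=E_0\cap E'$ is a section of $E'\to Q_0^2$, or equivalently since the proper transform of a line through the center of a point blow-up meets the exceptional divisor transversally in one point). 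Both computations are right and consistent with $H\cdot(\ell-e)=E_0\cdot(\ell-e)=1$. Your argument is more elementary and self-contained — it does not require knowing that the Mori cone has exactly the two stated extremal rays — at the cost of some explicit geometry; the paper's argument is shorter but leans on the unproved (though standard) assertion about the Mori cone. The only cosmetic issue is the mild abuse of notation in writing $\pi_{1*}(e')=\ell$: strictly, $\pi_{1*}(e')$ is the class of a line in $H_2(Q^4,\bZ)$, i.e.\ $\pi_{1*}(\ell)$, which is all the projection formula needs.
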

\begin{proof}
Note that the Mori cone of $X_0$ has only two extremal rays $\bR_{\geq0}e$ and $\bR_{\geq0}(\ell-e)$. The contraction $X_0\xrightarrow{\pi_1}Q^4$ corresponds to the extremal ray $\bR_{\geq0}e$, and the contraction $X_0\xrightarrow{\pi_2}\bP^4_0$ corresponds to the extremal ray $\bR_{\geq0}e'$. So $e\neq e'$, and hence $e'=\ell-e$.
\end{proof}

\begin{lemma}
$e=\ell'-2e'$.
\end{lemma}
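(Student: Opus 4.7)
The plan is to use intersection theory with the divisors $H'$ and $E'$ to pin down the coefficients of $e$ in the basis $\{\ell', e'\}$ of $H_2(X_0, \bZ)$. Writing $e = a\ell' + be'$, the duality relations
\[
H'.\ell' = 1,\quad H'.e' = 0,\quad E'.\ell' = 0,\quad E'.e' = -1
\]
immediately yield $a = H'.e$ and $b = -E'.e$, so it suffices to compute these two intersection numbers.

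For $H'.e$, I would invoke the geometric construction: $\pi_2|_{E_0}\colon E_0 \to \pi_2(E_0)$ is an isomorphism of $\bP^3$'s, since it is the restriction of the projection $p_2\colon D_\infty = \bP_{\bP^4_0}(\cO_{\bP^4_0}(1)\oplus\{0\}) \xrightarrow{\cong} \bP^4_0$ to the linear subvariety $E_0 = X_0\cap D_\infty$. Because $\pi_2(E_0)$ is a linearly embedded hyperplane in $\bP^4_0$ and $H'$ is the pullback of the hyperplane class, the image $\pi_2(e)$ is a projective line, so $H'.e = 1$ and hence $a = 1$.

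For $E'.e$, the key fact already recorded in the construction is that $E' \cap E_0 = Q^2_\infty$ is a smooth quadric surface inside $E_0 \cong \bP^3$. Choosing a representative line $e \subset E_0$ that is not contained in $Q^2_\infty$, the intersection $e \cap E'$ equals $e \cap Q^2_\infty$, which is transverse and consists of exactly $2$ points, since a generic line in $\bP^3$ meets a smooth quadric surface in two points counted with multiplicity. Therefore $E'.e = 2$, giving $b = -2$, and combining yields the desired identity $e = \ell' - 2e'$.

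The only mild subtlety is the transversality in the second computation, but since $e$ is only defined as the class of \emph{a} line in $E_0 \cong \bP^3$, one freely picks a representative line meeting $Q^2_\infty$ transversely in two distinct points; this is not a genuine obstacle. A sanity check is also available: one can verify consistency with the already-established relation $e' = \ell - e$ by combining the two to get $\ell = \ell' - e'$, and confirm via $H.\ell = 1$, $E_0.\ell = 0$ that this is the correct pullback of the line class on $Q^4$ in the blow-up basis.
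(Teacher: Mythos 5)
Your argument is correct and is essentially the paper's own proof: write $e=a\ell'+be'$, get $a=1$ because $\pi_2$ maps a line in $E_0$ to a line in the $3$-plane $\pi_2(E_0)$, and get $b=-2$ because a general line in $E_0$ meets $Q^2_\infty=E'\cap E_0$ in two points. You even record the intersection number with the correct sign, $E'.e=+2$ (consistent with $E'.\ell'=0$, $E'.e'=-1$), whereas the paper's proof has a small sign slip writing $E'.e=-2$ before reaching the same conclusion $b=-2$.
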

\begin{proof}
Write $e=a\ell'+be'$ for some $a,b\in\bC$. Since a line in $E_0\cong\bP^3$ is mapped via $\pi_2$ to a line in the $3$-plane $\pi_2(E_0)\subset\bP^4_0$, it follows that $a=1$. Moreover, since a line in $E_0$ in general position meets $Q^2_\infty=E'\cap E_0$ at two points, it follows that $E'.e=-2$, which implies that $b=-2$.
\end{proof}

So we see that
\[
\left\{
\begin{array}{ccc}
\ell'&=&2\ell-e,\\
e'&=&\ell-e,
\end{array}
\right.\textrm{ and }
\left\{
\begin{array}{ccc}
\ell&=&\ell'-e',\\
e&=&\ell'-2e'.
\end{array}
\right.
\]
By considering Poincar\'e duals, we obtain
\[
\left\{
\begin{array}{ccc}
H'^3&=&H^3-L_0,\\
F'&=&\frac{1}{2}H^3-L_0,
\end{array}
\right.\textrm{ and }
\left\{
\begin{array}{ccc}
\frac12H^3&=&H'^3-F',\\
L_0&=&H'^3-2F'.
\end{array}
\right.
\]
By considering Poincar\'e pairings, we obtain
\[
\left\{
\begin{array}{ccc}
H'&=&H-E_0,\\
E'&=&H-2E_0,
\end{array}
\right.\textrm{ and }
\left\{
\begin{array}{ccc}
H&=&2H'-E',\\
E_0&=&H'-E'.
\end{array}
\right.
\]

\subsubsection{Comparison in $H^4(X_0)$}

Note  $\pi_2^{-1}(Q^2_0)=p_2^{-1}(Q^2_0)$. Since $l_\pm\subset Q^2_0$, it follows that
\[
\pi_1(\pi_2^{-1}(l_\pm))=\pi_1(p_2^{-1}(l_\pm))=p_1(p_2^{-1}(l_\pm)).
\]
So $\pi_1(\pi_2^{-1}(l_\pm))$ is the cone over the line $l_\pm$ with vertex $x$, and hence it is a $2$-plane. Moreover, for any $y\in l_\pm$, the line $\overline{xy}$ is contained in $Q^4$. So the $2$-plane $\pi_1(\pi_2^{-1}(l_\pm))$ is contained in $Q^4$, and the intersection
\[
\pi_1(\pi_2^{-1}(l_+))\cap\pi_1(\pi_2^{-1}(l_-))
\]
is the line passing through $x$ and $l_+\cap l_-$. So we see that $\pi_1(\pi_2^{-1}(l_+))$ and $\pi_1(\pi_2^{-1}(l_-))$ belong to different rulings by $2$-planes of $Q^4$. Rename $l_+$ and $l_-$ if necessary so that $[\pi_1(\pi_2^{-1}(l_\pm))]=\wp_\pm$,

\begin{lemma}
$S'_\pm=\wp_\pm-P_0$.
\end{lemma}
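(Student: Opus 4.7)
The plan is to expand $S'_\pm$ in the basis $\{\wp_+,\wp_-,P_0\}$ of $H^4(X_0,\bQ)$ and recover its coefficients by Poincar\'e pairing. Writing $S'_\pm = a_\pm\wp_+ + b_\pm\wp_- + c_\pm P_0$, the intersection products recorded earlier in this section (in particular $\wp_+^2=\wp_-^2=\tfrac{1}{2}H^4$, $\wp_+\cup\wp_-=0$ and $P_0^2=-\tfrac{1}{2}H^4$), together with the vanishing $P_0\cdot\wp_\theta=0$ (which holds because $\wp_\theta=\pi_1^*\wp_\theta$ restricts to $0$ on $E_0\supset P_0$) and $\int_{X_0}\tfrac{1}{2}H^4=1$, invert to
\[
a_\pm = S'_\pm\cdot\wp_+,\qquad b_\pm = S'_\pm\cdot\wp_-,\qquad c_\pm = -\,S'_\pm\cdot P_0.
\]

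For $a_\pm$ and $b_\pm$ I would invoke the projection formula for $\pi_1$. The preceding paragraph exhibits $\pi_2^{-1}(l_\pm)$ as the strict transform under $\pi_1$ of the $2$-plane $V_\pm=\pi_1(\pi_2^{-1}(l_\pm))$, with $[V_\pm]=\wp_\pm$ after the renaming. Since $\pi_1$ restricts to a birational morphism $\pi_2^{-1}(l_\pm)\to V_\pm$, we have $(\pi_1)_*S'_\pm=\wp_\pm$, and the projection formula yields
\[
S'_\pm\cdot\pi_1^*\wp_\theta \;=\; \wp_\pm\cdot\wp_\theta \;=\; \delta_{\pm,\theta}\quad\text{in }H^*(Q^4),
\]
giving $a_\pm=\delta_{\pm,+}$ and $b_\pm=\delta_{\pm,-}$.

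Computing $c_\pm$ amounts to evaluating $S'_\pm\cdot P_0$ geometrically. The subvariety $\tilde V_\pm:=\pi_2^{-1}(l_\pm)$ crosses $E_0\cong\bP^3$ transversally along the line $\bP(T_xV_\pm)$ given by the projectivized tangent plane of $V_\pm$ at $x$; a generic plane $P_0\subset E_0$ meets this line transversally inside $E_0$ at a single point $p$. A tangent-space count at $p$---using $T_pP_0\subset T_pE_0$ and the fact that $T_p\tilde V_\pm$ has a nontrivial component normal to $E_0$---then shows that $\tilde V_\pm$ and $P_0$ meet transversally in $X_0$ at $p$. Hence $S'_\pm\cdot P_0=1$, so $c_\pm=-1$, and $S'_\pm=\wp_\pm-P_0$. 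The subtle point is precisely this transversality verification: one must rule out any excess contribution arising from the fact that both $P_0$ and $\tilde V_\pm\cap E_0$ live inside the exceptional divisor, and the transversal crossing of $\tilde V_\pm$ with $E_0$ is what excludes it.
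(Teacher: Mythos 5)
Your proposal is correct and follows essentially the same route as the paper: the paper writes $S'_\pm=\wp_\pm+a_\pm P_0$ (implicitly using $(\pi_1)_*S'_\pm=\wp_\pm$, which you make explicit via the projection formula) and likewise pins down the last coefficient from $S'_\pm. P_0=1$ together with $P_0.P_0=-1$. Your added transversality check for $S'_\pm. P_0=1$ is a detail the paper leaves implicit, but the argument is the same.
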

\begin{proof}
We have
\begin{align*}
S'_\pm=\wp_\pm+a_\pm P_0,\mbox{ for some }a_\pm\in\bQ.
\end{align*}
Since $\pi_2^{-1}(l_\pm)\cap E_0$ is a line in $E_0\cong\bP^3$, and $P_0$ is the class of a $2$-plane in $E_0$, it follows that
\begin{align}\label{eq-lineplaneintersectioninE_0}
S'_\pm.P_0=1
\end{align}
Since $P_0.P_0=-1$, it follows that $a_\pm=-1$.
\end{proof}

\begin{lemma}
$P_0=H'^2-S'_+-S'_-$.
\end{lemma}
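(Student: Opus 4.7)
The plan is to compute $H'^2$ directly in the basis $\mathcal{B}_0$, then subtract $S'_+$ and $S'_-$ using the previous lemma.

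First I would use the change-of-basis relation $H'=H-E_0$ established in the $H^2$ comparison, and expand
\[
H'^2 \;=\; (H-E_0)^2 \;=\; H^2 \;-\; 2\,H\!\cup\! E_0 \;+\; E_0\!\cup\! E_0.
\]
The classical cup-product table already gives $H\cup H=\wp_++\wp_-$ and $E_0\cup E_0=-P_0$, so the only nontrivial ingredient is the value of $H\cup E_0$. Since $H=\pi_1^*H$ and $\pi_1$ collapses $E_0\cong\mathbb{P}^3$ to the single point $x\in Q^4$, the restriction $(\pi_1^*H)|_{E_0}=0$, and hence by the projection formula $H\cup E_0 = (i_{E_0})_*\bigl(i_{E_0}^*\pi_1^*H\bigr)=0$, where $i_{E_0}\colon E_0\hookrightarrow X_0$ is the inclusion. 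Plugging this in yields
\[
H'^2 \;=\; \wp_+ + \wp_- - P_0.
\]

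Next I would invoke the previous lemma, which states $S'_\pm=\wp_\pm-P_0$, so that
\[
S'_+ + S'_- \;=\; \wp_+ + \wp_- - 2P_0.
\]
Subtracting gives $H'^2 - S'_+ - S'_- = (\wp_+ + \wp_- - P_0)-(\wp_++\wp_--2P_0)=P_0$, as required.

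The only nonroutine step is justifying $H\cup E_0=0$, but this is immediate from the general blow-up principle that the pullback of a cohomology class on the base restricts trivially to any fibre contracted to a point. There is no genuine obstacle here; the proof is a two-line algebraic manipulation once the change-of-basis formulas and the expression for $S'_\pm$ are in place.
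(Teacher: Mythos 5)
Your proof is correct, and it takes a different route from the paper's. The paper proves the identity by the method of undetermined coefficients entirely inside the blow-up-of-$\bP^4_0$ picture: since $\pi_2$ maps a $2$-plane in $E_0$ to a $2$-plane in $\bP^4_0$, one can write $P_0=H'^2+b_+S'_++b_-S'_-$, and the coefficients $b_\pm=-1$ are then pinned down from the intersection numbers $S'_\pm.P_0=1$, $S'_\pm.S'_\pm=0$, $S'_\pm.S'_\mp=-1$. You instead go through the other basis: expand $H'^2=(H-E_0)^2$ using the change-of-basis relation $H'=H-E_0$ and the cup-product table for $\mathcal{B}_0$ (your justification that $H\cup E_0=0$ via the projection formula and the contraction of $E_0$ to the point $x$ is sound, and is consistent with that product being absent from the paper's list of non-trivial products), obtaining $H'^2=\wp_++\wp_--P_0$, and then subtract $S'_\pm=\wp_\pm-P_0$ from the previous lemma. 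In effect you establish first the entry $H'^2=\wp_++\wp_--P_0$ of the summary table that the paper only records as a consequence of the two lemmas, and deduce the lemma from it; the paper's argument avoids the cup-product table for $\mathcal{B}_0$ but needs the self- and cross-intersection numbers of the $S'_\pm$, while yours leans on the already-established relation $H'=H-E_0$ and the multiplication table. Both are short and elementary; your version is arguably the more mechanical of the two once the $H^2$/$H^6$ comparison is in place.
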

\begin{proof}
Since the image under $\pi_2$ of a $2$-plane in $E_0\cong\bP^3$ is a $2$-plane in $\pi_2(E_0)\subset\bP^4_0$, it follows that
\[
P_0=H'^2+b_+S'_++b_-S'_-,\mbox{ for some }b_\pm\in\bQ.
\]
Since $S'_\pm.S'_\pm=0$ and $S'_\pm.S'_\mp=-1$, it follows from \eqref{eq-lineplaneintersectioninE_0} that $b_\pm=-1$.
\end{proof}

So we have
\[
\left\{\begin{array}{ccc}
S'_\pm&=&\wp_\pm-P_0,\\
H'^2&=&\wp_++\wp_--P_0,
\end{array}\right.\textrm{ and  }\left\{\begin{array}{ccc}
\wp_\pm&=&H'^2-S'_\mp,\\
P_0&=&H'^2-S'_+-S'_-.
\end{array}\right.
\]

\subsection{Two-point invariants}

\begin{lemma}\label{lemma-anotherblowupofQ4atonepoint}
	The non-zero, degree-$ke'$ for $k\geq1$, two-point invariants with insertions in $\mathcal{B}_0'$ are
	\[
	\<S'_+,S'_-\>^{X_0}_{e'}=\<E',F'\>^{X_0}_{e'}=1.
	\]
\end{lemma}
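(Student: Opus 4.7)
The plan is to exploit the $\bP^1$-bundle structure $E' \to Q^2_0$ together with the identification of $e'$ as the class of a fiber. Since $\bR_{\geq 0}e'$ is the extremal ray of the Mori cone contracted by $\pi_2: X_0 \to \bP^4_0$, every genus-zero stable map of class $ke'$ with $k \geq 1$ is contracted by $\pi_2$ and, by connectedness of the source, factors through a single fiber $F_z \cong \bP^1$ over some $z \in Q^2_0$. This gives a natural morphism $p := \pi_2 \circ ev_i : \overline{M}_{0,2}(X_0, ke') \to Q^2_0$ (independent of $i$), and each evaluation map factors as $\overline{M}_{0,2}(X_0, ke') \xrightarrow{ev_i'} E' \xrightarrow{j} X_0$.

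Setting $\alpha := c_1(N_{E'/X_0})$ and using the self-intersection formula $j^* j_!\beta = \alpha \cdot \beta$, every basis class $\gamma \in \mathcal{B}_0'$ admits a decomposition $ev_i^*\gamma = ev_i'^*\alpha^{a(\gamma)} \cdot p^* b(\gamma)$ for suitable $a(\gamma) \in \{0,1\}$ and base class $b(\gamma) \in H^*(Q^2_0)$, read off from $[E'] = j_!\mathbf{1}$, $[S'_\pm] = j_!\pi^*[l_\pm]$, $[F'] = j_!\pi^*[\mathrm{pt}]$, and $ev_i^*H'^k = p^*(H'|_{Q^2_0})^k$. The projection formula then reduces $\langle\gamma_1,\gamma_2\rangle^{X_0}_{ke'}$ to an integral over $Q^2_0$ whose integrand contains the factor $b(\gamma_1)\cdot b(\gamma_2) \in H^*(Q^2_0) \cong H^*(\bP^1 \times \bP^1)$, so the invariant vanishes whenever this product is zero. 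A direct inspection of the $b$-values (noting $b(H'^3) = b(H'^4) = 0$ automatically) shows that $b(\gamma_1)\cdot b(\gamma_2)$ is nonzero only for pairs of type $(\mathbf 1,\gamma)$, $(E',\gamma)$, $(H',H')$, $(H',S'_\pm)$, or $(S'_+,S'_-)$, and in every such case $\deg\gamma_1 + \deg\gamma_2 \leq 4$. Since the virtual-dimension condition reads $\deg\gamma_1 + \deg\gamma_2 = k + 3 \geq 4$, this isolates exactly two surviving candidates, both at $k = 1$: namely $\langle E',F'\rangle^{X_0}_{e'}$ and $\langle S'_+,S'_-\rangle^{X_0}_{e'}$, and forces every invariant at $k \geq 2$ to vanish automatically.

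For $k = 1$ the normal bundle $N_{F_z/X_0} \cong \cO_{\bP^1}^{\oplus 2} \oplus \cO_{\bP^1}(-1)$ has vanishing $H^1$, so the obstruction theory is trivial and the virtual class coincides with the fundamental class of $Q^2_0 \times (\bP^1)^2$. Fiberwise integration over $(\bP^1)^2$, using that $\alpha|_{F_z}$ equals minus the point class of $\bP^1$, yields $p_*(ev_1'^*\alpha \cdot ev_2'^*\alpha) = 1 \in H^0(Q^2_0)$, and the two surviving invariants then evaluate to $\int_{Q^2_0}[l_+][l_-] = 1$ and $\int_{Q^2_0}[\mathrm{pt}] = 1$ respectively. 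The main potential obstacle is the nontrivial obstruction bundle for $k \geq 2$ (of rank $k-1$, arising from $H^1(\bP^1, \cO(-k))$ at each degree-$k$ cover of a fiber), but this is sidestepped by the argument above: the vanishing of $b(\gamma_1) b(\gamma_2)$ already forces the integrand to vanish as a cohomology class on the actual moduli space, independently of how one folds in the obstruction contribution.
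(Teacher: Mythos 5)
Your argument is correct and follows essentially the same route as the paper's proof: both localize stable maps of class $ke'$ to the exceptional divisor $E'$ via the extremal contraction $\pi_2$, write the restricted basis classes as $\xi^{a}\cup\pi_2^*b$ with $\xi=c_1(N_{E'|X_0})$, push forward to $Q^2_0$ by the projection formula, and combine the dimension constraint with the vanishing of $b(\gamma_1)\cup b(\gamma_2)$ on the surface $Q^2_0$ to isolate the two $k=1$ invariants, which are then evaluated using the unique line in each fiber of $E'\to Q^2_0$. The only (harmless) difference is that you exclude $k\geq 2$ purely from the class-level vanishing of $b(\gamma_1)\cup b(\gamma_2)$, whereas the paper folds the rank-$(k-1)$ obstruction bundle into its dimension count.
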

\begin{proof}
	Let $\iota:E'\hookrightarrow X_0$ be the natural embedding, and we also denote by $\iota$ the induced embedding of moduli spaces of stable maps:
	\[
	\iota:	\overline M_{0,2}(E',ke')\hookrightarrow\overline M_{0,2}(X_0,ke').
	\]
	Note that the blow-down morphism $X_0\xrightarrow{\pi_2}\bP^4_0$ is the contraction corresponding to the extremal ray $\bR_{\geq0}e'$. So we have
	\[
	\iota\Big(\overline M_{0,2}(E',ke')\Big)=\overline M_{0,2}(X_0,ke').
	\]
	Consider the universal diagram
	\[
	\begin{CD}
		\overline M_{0,3}(E',ke')@>ev_3>>E'\\
		@V \pi VV  \\
		\overline M_{0,2}(E',ke')
	\end{CD}
	\]
	and let $R:=R^1\pi_*ev^*_3N_{E'|X_0}$, where $\pi$ denotes the  natural morphism by forgetting the third marking point. From the construction of virtual fundamental classes, we have
	\[
	\iota_*\big(\mathbf{e}(R)\cap[\overline M_{0,2}(E',ke')]^{\rm virt}\big)=[\overline M_{0,2}(X_0,ke')]^{\rm virt},
	\]
	where $\mathbf{e}(R)$ is the Euler class of $R$. So for $B_1,B_2\in\mathcal B'_0$, we see that
	\[
	\<B_1,B_2\>_{ke'}^{X_0}=\int_{[\overline{M}_{0, 2}(E',ke')]^{\rm virt}}ev_1^*\iota^*B_1\cup ev_2^*\iota^*B_2\cup\mathbf{e}(R).
	\]
	For $i=1,2$, consider the morphism $f_i:\overline M_{0,2}(E',ke')\to Q^2_0$ defined by the composition
	\[
	f_i:=\pi_2\circ ev_i.
	\]
	We see that $f_1=f_2$, and we let $f:=f_1=f_2$. Then we have
	\[
	\<B_1,B_2\>_{ke'}^{X_0}=\int_{Q^2_0}PD\Big(f_*\big(ev_1^*\iota^*B_1\cup ev_2^*\iota^*B_2\cup\mathbf{e}(R)\cap[\overline M_{0,2}(E',ke')]^{\rm virt}\big)\Big).
	\]
	Let $L_\pm\in H^2(Q^2_0)$ be the class of $l_\pm$, and $P\in H^4(Q^2_0)$ be the class of a point. Then $\mathbf{1}, L_+, L_-, P$ form a basis $\mathcal B''$ of $H^*(Q^2_0)$. Set $\xi:=c_1(N_{E'|X_0})\in H^2(E')$. We have
 \begin{align*}
	\iota^*\mathbf{1}&=\mathbf{1}, &
	\iota^*H'&=\pi_2^*(L_++L_-), &
	\iota^*H'^2&=\pi_2^*(2P), &
	\iota^*H'^3&=0,\\
	\iota^*H'^4&=0, &
	\iota^*E'&=\xi, &
	\iota^*S'_\pm&=\pi_2^*L_\pm\cup\xi, &
	\iota^*F'&=\pi_2^*P\cup\xi.
	\end{align*}
 We see that for $i=1,2$, $\iota^*B_i$ has the form
\[
\iota^*B_i=\pi_2^*\underline{B_i}\cup\xi^{\alpha(i)},\quad \underline{B_i}\in\mathcal B'',\alpha(i)\in\{0,1\},
\]
which implies that
\[
ev_i^*\iota^*B_i=f^*\underline{B_i}\cup ev_i^*\xi^{\alpha(i)}.
\]
So we use the projection formula to get
\[
\<B_1,B_2\>_{ke'}^{X_0}=\int_{Q^2_0}\underline{B_1}\cup\underline{B_2}\cup PD\Big(f_*\big(ev_1^*\xi^{\alpha(1)}\cup ev_2^*\xi^{\alpha(2)}\cup\mathbf{e}(R)\cap[\overline M_{0,2}(E',ke')]^{\rm virt}\big)\Big).
\]
Assume that $\<B_1,B_2\>_{ke'}^{X_0}\neq0$. Then the above formula gives
\[
\deg\underline{B_1}+\deg\underline{B_2}\leq4\mbox{ and }\underline{B_1}\cup\underline{B_2}\neq0.
\]
We use the dimension constraint to get
\[
(4-3)+3k+2=\Big(\deg\underline{B_1}+\deg\underline{B_2}\Big)+\Big(\alpha(1)+\alpha(2)\Big)\leq4+2=6,
\]
which implies that $k=1$. We use the dimension constraint again to get
\[
\deg\underline{B_1}+\deg\underline{B_2}=4\mbox{ and }\alpha(1)=\alpha(2)=1.
\]
From the condition $\underline{B_1}\cup\underline{B_2}\neq0$, we only need to consider the cases
\[
(B_1,B_2)=(E',F')\mbox{ and }(S'_+,S'_-).
\]
Since $k=1$, it follows from $H^1(\bP^1,\cO(-1))=0$ that $\mathbf{e}(R)=0$. So we obtain
\[
\<E',F'\>_{e'}^{X_0}=\<\xi,\pi_2^*P\cup\xi\>_{e'}^{E'}\mbox{ and }\<S'_+,S'_-\>_{e'}^{X_0}=\<\pi_2^*L_+\cup\xi,\pi_2^*L_-\cup\xi\>_{e'}^{E'}.
\]
Both of the Gromov-Witten invariants of $E'$ are equal to one, since there is a unique line contained in a fiber of $E'$.
\end{proof}

\begin{lemma}\label{lemnonzero1}
	A curve class in $H_2(X_0, \mathbb{Z})$ admits non-zero two-point invariants only if it belongs to
	$\{
	\ell-e,e,\ell,2\ell-e
	\}$.
\end{lemma}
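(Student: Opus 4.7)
The plan is to compare the virtual dimension constraint with the Mori cone structure of $X_0$, and then invoke Lemma \ref{lemma-anotherblowupofQ4atonepoint} to eliminate the remaining candidates. First I would record that $c_1(X_0)=4H-3E_0$ (from the standard blow-up formula for a point in a fourfold), so that $c_1(X_0)\cdot e=3$ and $c_1(X_0)\cdot e'=c_1(X_0)\cdot(\ell-e)=4-3=1$. Since $e$ and $e'$ generate the Mori cone of $X_0$, any effective class is uniquely of the form $\mathbf{d}=\alpha e+\beta e'$ with $\alpha,\beta\in\bZ_{\geq 0}$ not both zero, and then $c_1(X_0)\cdot\mathbf{d}=3\alpha+\beta$.

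Next, the virtual dimension of $\overline{M}_{0,2}(X_0,\mathbf{d})$ equals $\dim X_0+c_1(X_0)\cdot\mathbf{d}+2-3=3+3\alpha+\beta$. A nonzero invariant $\<\gamma_1,\gamma_2\>^{X_0}_{\mathbf{d}}$ requires $\deg\gamma_1+\deg\gamma_2=3+3\alpha+\beta$ with each $\deg\gamma_i\leq\dim X_0=4$, hence $3\alpha+\beta\leq 5$. Enumerating the integral solutions in $\bZ_{\geq 0}^2\setminus\{(0,0)\}$ yields the finite list
\[
\{\,e,\ \ell=e+e',\ 2\ell-e=e+2e',\ e',\ 2e',\ 3e',\ 4e',\ 5e'\,\},
\]
which contains the four desired classes together with $ke'$ for $k\in\{2,3,4,5\}$.

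Finally, to eliminate these four extraneous classes I would appeal directly to Lemma \ref{lemma-anotherblowupofQ4atonepoint}: that statement enumerates \emph{all} nonzero two-point invariants of class $ke'$ over the basis $\mathcal{B}_0'$ and finds them only at $k=1$. Since $\mathcal{B}_0'$ is a $\bQ$-basis of $H^*(X_0)$, multilinearity of Gromov-Witten invariants forces every two-point invariant of class $ke'$ to vanish for $k\geq 2$, leaving precisely $\{e,e',\ell,2\ell-e\}$. There is no serious obstacle here; the substantive work was already carried out in Lemma \ref{lemma-anotherblowupofQ4atonepoint} via the Euler-class analysis of $R^1\pi_*ev_3^*N_{E'|X_0}$ on the contracted divisor $E'$.
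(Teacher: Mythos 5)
Your proposal is correct and follows essentially the same route as the paper: parametrize effective classes via the Mori cone generators $e$ and $e'=\ell-e$, use $c_1(X_0)=4H-3E_0$ and the dimension constraint to get $3\alpha+\beta\leq 5$, discard the zero class, and invoke Lemma \ref{lemma-anotherblowupofQ4atonepoint} to eliminate $ke'$ for $2\leq k\leq 5$. No discrepancies.
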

\begin{proof}
An effective curve class has the form
\[
a(\ell-e)+be,\quad a,b\in\bZ_{\geq0}.
\]
Notice $c_1(X_0)=4H-3E_0$ (see e.g. \cite[Chapter 4, Section 6]{GrHa})). Using the dimension constraint, we see that $a(\ell-e)+be$ admits non-zero two-point invariants only if
\[
a+3b\leq5.
\]
Note that $(a,b)\neq(0,0)$ since the space $\overline M_{0,2}(X_0,0)$ is empty. Note that $\ell-e=e'$. So from Lemma \ref{lemma-anotherblowupofQ4atonepoint}, we can exclude the cases $k(\ell-e)$ for $2\leq k\leq5$.
	\end{proof}

\begin{lemma}\label{lemnonzero2}
	The non-zero, degree-$(\ell-e)$, two-point invariants with insertions in $\mathcal{B}_0$ are
	\[
	\<P_0,P_0\>_{\ell-e}^{X_0}=\<H,L_0\>_{\ell-e}^{X_0}=\<E_0,L_0\>_{\ell-e}^{X_0}=2
	\]
	and
	\[
	\<P_0,\wp_\pm\>_{\ell-e}^X=\<\wp_+,\wp_-\>_{\ell-e}^X=\<H,\frac12H^3\>_{\ell-e}^X=\<E_0,\frac12H^3\>_{\ell-e}^X=1.
	\]
\end{lemma}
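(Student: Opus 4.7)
The plan is to deduce this lemma directly from Lemma \ref{lemma-anotherblowupofQ4atonepoint} by changing basis from $\mathcal{B}_0'$ to $\mathcal{B}_0$. The key observation is that the curve class $\ell-e$ in $\mathcal{B}_0$-coordinates equals the class $e'$ identified in Section 3.3, so the invariants claimed are precisely degree-$e'$ two-point invariants, now expressed with $\mathcal{B}_0$-insertions.

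First I would substitute the change-of-basis formulas established in Section 3.3, namely
\[
H=2H'-E',\quad E_0=H'-E',\quad \wp_\pm=H'^2-S'_\mp,\quad P_0=H'^2-S'_+-S'_-,
\]
\[
\tfrac{1}{2}H^3=H'^3-F',\quad L_0=H'^3-2F',
\]
into each bracket, then expand using bilinearity of Gromov-Witten invariants. By Lemma \ref{lemma-anotherblowupofQ4atonepoint}, the only non-vanishing degree-$e'$ two-point invariants among elements of $\mathcal{B}_0'$ are $\<S'_+,S'_-\>^{X_0}_{e'}=\<E',F'\>^{X_0}_{e'}=1$, so after expansion each target invariant collapses to a short sum of these two. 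For instance,
\[
\<P_0,P_0\>^{X_0}_{\ell-e}=\<H'^2-S'_+-S'_-,\,H'^2-S'_+-S'_-\>^{X_0}_{e'}=2\<S'_+,S'_-\>^{X_0}_{e'}=2,
\]
and similarly $\<H,L_0\>_{\ell-e}=\<2H'-E',H'^3-2F'\>_{e'}=2\<E',F'\>_{e'}=2$, etc. The remaining six claimed values fall out of analogous two-line calculations.

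To see that no other two-point invariants of degree $\ell-e$ are non-zero, I would invoke the dimension constraint: since $c_1(X_0)\cdot(\ell-e)=4-3=1$, the virtual dimension of $\overline{M}_{0,2}(X_0,\ell-e)$ is $4+1+2-3=4$, so non-vanishing forces the complex degrees of the two insertions to sum to $4$. Pairings of type $(0,4)$ vanish by the fundamental-class axiom; the remaining type-$(1,3)$ and type-$(2,2)$ combinations from $\mathcal{B}_0$ are exactly the ones appearing in the statement, together with $\<\wp_\pm,\wp_\pm\>$, and one checks in the same way that $\<\wp_\pm,\wp_\pm\>_{\ell-e}=\<H'^2-S'_\mp,H'^2-S'_\mp\>_{e'}=0$.

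I do not anticipate a genuine obstacle here: once Lemma \ref{lemma-anotherblowupofQ4atonepoint} is in hand and the dictionary between $\mathcal{B}_0$ and $\mathcal{B}_0'$ has been tabulated in Section 3.3, the proof is a bookkeeping exercise. The only point requiring mild care is the sign/coefficient tracking in $L_0=H'^3-2F'$ and $E_0=H'-E'$, which produces the factor of $2$ in the first three identities but not in the remaining four.
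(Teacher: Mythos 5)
Your proposal is correct and follows exactly the paper's own route: identify $\ell-e$ with $e'$, rewrite each $\mathcal{B}_0$-insertion in terms of $\mathcal{B}_0'$ via the dictionary of Section 3.3, and expand using the two non-vanishing invariants from Lemma \ref{lemma-anotherblowupofQ4atonepoint} (the paper works out only the $\<\wp_+,\wp_-\>$ case and leaves the rest to the reader). Your explicit coefficient tracking, including the factor of $2$ and the vanishing of $\<\wp_\pm,\wp_\pm\>_{\ell-e}$, checks out.
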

\begin{proof}
	From Lemma \ref{lemma-anotherblowupofQ4atonepoint}, the only non-zero, degree-$e'$, two-point invariants with insertions in $\mathcal{B}_{0}'$ are
	\[
	\<S'_+,S'_-\>^{X_0}_{e'}=\<E',F'\>^{X_0}_{e'}=1.
	\]
	Note that $e'=\ell-e$, and we can use this to determine invariants with insertions in $\mathcal{B}_0$. For example,
	\[
	\<\wp_+,\wp_-\>_{\ell-e}^{X_0}=\<H'^2-S'_-,H'^2-S'_+\>_{e'}^{X_0}=1.
	\]
	We leave the rest cases to interested readers.
\end{proof}

\begin{lemma}\label{lemnonzero3}
	The only non-zero, degree-$e$, two-point invariant with insertions in $\mathcal{B}_0$ is \[\<L_0,L_0\>_e^{X_0}=1.\]
\end{lemma}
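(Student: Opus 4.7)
My plan is to adapt the strategy used in Lemma \ref{lemma-anotherblowupofQ4atonepoint}. Any irreducible curve of class $e$ must lie in the exceptional divisor $E_0 \cong \bP^3$ (since $\pi_{1*}e = 0$), so the inclusion $\iota_0: E_0 \hookrightarrow X_0$ identifies $\overline{M}_{0,2}(X_0, e)$ with $\overline{M}_{0,2}(E_0, e)$. The virtual class formalism then yields
\[
\langle B_1, B_2 \rangle_e^{X_0} = \int_{[\overline{M}_{0,2}(E_0, e)]^{\mathrm{virt}}} ev_1^* \iota_0^* B_1 \cup ev_2^* \iota_0^* B_2 \cup \mathbf{e}(R),
\]
where $R = R^1 \pi_* ev_3^* N_{E_0|X_0}$ is the obstruction sheaf. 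Since $E_0$ is the exceptional divisor of a point blow-up of the smooth fourfold $Q^4$, we have $N_{E_0|X_0} \cong \cO_{\bP^3}(-1)$; its restriction to any line is $\cO_{\bP^1}(-1)$, which has vanishing $H^1$, so $R = 0$ and $\mathbf{e}(R) = 1$. The computation thus reduces to the classical two-point invariant on $\bP^3$.

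Next I would compute the pullbacks $\iota_0^* B$ for each element of $\mathcal{B}_0$. Writing $h$ for the hyperplane class of $E_0 \cong \bP^3$, the identity $\iota_0^* E_0 = c_1(N_{E_0|X_0}) = -h$ together with the self-intersection/Gysin formula gives $\iota_0^* P_0 = -h^2$ and $\iota_0^* L_0 = -h^3$, while any positive-degree class pulled back from $Q^4$ via $\pi_1^*$ is killed by $\iota_0^*$ since $\pi_1 \circ \iota_0$ is the constant map to $x \in Q^4$. The virtual dimension of $\overline{M}_{0,2}(\bP^3, e)$ equals $6$, so a non-vanishing invariant requires $\deg \iota_0^* B_1 + \deg \iota_0^* B_2 = 6$. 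As the non-vanishing pullbacks have degree at most three, the only admissible pair is $B_1 = B_2 = L_0$, and $\langle L_0, L_0 \rangle_e^{X_0} = \langle h^3, h^3 \rangle_e^{\bP^3} = 1$, reflecting the fact that a unique line passes through two distinct points of $\bP^3$.

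The only potential obstacle is the vanishing of the obstruction bundle, but for degree $e$ this is immediate from $H^1(\bP^1, \cO(-1)) = 0$; the remaining steps amount to routine bookkeeping with the Gysin formula and a dimension count.
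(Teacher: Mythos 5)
Your proposal is correct and is essentially the argument the paper intends: the paper's proof simply defers to the method of Lemma \ref{lemma-anotherblowupofQ4atonepoint}, namely localizing stable maps of class $e$ to the exceptional divisor, observing that $N_{E_0|X_0}\cong\cO_{\bP^3}(-1)$ kills the obstruction bundle, and reducing to a classical count on $\bP^3$. Your pullback computations ($\iota_0^*E_0=-h$, $\iota_0^*P_0=-h^2$, $\iota_0^*L_0=-h^3$, and vanishing of positive-degree classes pulled back from $Q^4$) and the dimension count correctly isolate $\langle L_0,L_0\rangle_e^{X_0}=1$ as the unique non-zero invariant.
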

\begin{proof}
The proof is similar to that of Lemma \ref{lemma-anotherblowupofQ4atonepoint}, and we leave it to interested readers.
\end{proof}

\begin{lemma}\label{lemnonzero4}
	The only non-zero, degree-$\ell$, two-point invariants with insertions in $\mathcal{B}_0$ is
	\[
	\<\frac12H^3,\frac12H^4\>_{\ell}^{X_0}=1.
	\]
\end{lemma}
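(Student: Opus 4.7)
The plan is to narrow down the candidates by a dimension count, then evaluate each candidate using either Hu's blow-up formula or a direct geometric vanishing argument.

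Since $c_1(X_0)=4H-3E_0$ and $E_0\cdot\ell=0$, we have $c_1(X_0)\cdot\ell=4$, so $\overline{M}_{0,2}(X_0,\ell)$ has virtual dimension $4+4+2-3=7$. A non-zero invariant $\langle B_1,B_2\rangle^{X_0}_\ell$ forces $\deg B_1+\deg B_2=7$; inspecting $\mathcal{B}_0$, this leaves only the pairs $(\frac{1}{2}H^3,\frac{1}{2}H^4)$ and $(L_0,\frac{1}{2}H^4)$, since the only degree-$4$ class in $\mathcal{B}_0$ is $\frac{1}{2}H^4$ and the only degree-$3$ classes are $\frac{1}{2}H^3$ and $L_0$.

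For the pair $(\frac{1}{2}H^3,\frac{1}{2}H^4)$, I would apply the divisor axiom with $H$ to get
\[
\langle \tfrac{1}{2}H^3,\tfrac{1}{2}H^4\rangle^{X_0}_\ell=(H\cdot\ell)^{-1}\langle H,\tfrac{1}{2}H^3,\tfrac{1}{2}H^4\rangle^{X_0}_\ell=\langle H,\tfrac{1}{2}H^3,\tfrac{1}{2}H^4\rangle^{X_0}_\ell.
\]
All three insertions are pullbacks via $\pi_1:X_0\to Q^4$, and $(\pi_1)_*\ell$ equals the line class on $Q^4$, so the blow-up formula of \cite{Hu} identifies the right-hand side with $\langle H,\frac{1}{2}H^3,\frac{1}{2}H^4\rangle^{Q^4}_\ell=1$ recorded in Section~2.

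The hard part will be showing $\langle L_0,\frac{1}{2}H^4\rangle^{X_0}_\ell=0$, since $L_0$ is an exceptional class (not a pullback from $Q^4$) and the naive substitution $L_0=H'^3-2F'$ merely reproduces the invariant tautologically. I would instead argue geometrically. Choose representatives of $L_0$ and $\frac{1}{2}H^4$ as a generic line $\lambda\subset E_0\cong\bP^3$ and a generic point $y\in X_0\setminus(E_0\cup E')$, respectively. Among non-trivial effective decompositions $\ell=\alpha+\beta$, the only possibility is $\alpha=e$, $\beta=\ell-e$; a curve of class $e$ is a line in $E_0$, while a curve of class $\ell-e=e'$ is a fiber of $E'\to Q^2_0$ lying in $E'$, so any reducible class-$\ell$ stable map has image in $E_0\cup E'$ and therefore misses $y$. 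The remaining irreducible class-$\ell$ stable maps are proper transforms of lines in $Q^4$ not passing through $x$, which are disjoint from $E_0$ and hence from $\lambda$. In either case the two point constraints cannot be simultaneously satisfied, forcing the invariant to vanish.
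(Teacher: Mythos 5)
Your proposal is correct and follows essentially the same route as the paper: the dimension constraint isolates the two pairs $(\frac12H^3,\frac12H^4)$ and $(L_0,\frac12H^4)$, the first is evaluated by Hu's blow-up formula (the paper applies it directly to the two-point invariant, while you first insert a divisor — an immaterial difference), and the second vanishes for geometric reasons. Your vanishing argument, analyzing irreducible versus reducible class-$\ell$ stable maps in $X_0$ via the Mori cone, is just an upstairs rephrasing of the paper's observation that no line in $Q^4$ passes through $x$ and a general point $y$.
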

\begin{proof}
By the dimension constraint, we only need to consider the cases
\[
\<\frac12H^3,\frac12H^4\>_{\ell}^{X_0}\textrm{ and }\<L_0,\frac12H^4\>_{\ell}^{X_0}.
\]
For the former case, we use the blow-up formula \cite[Theorem 1.2]{Hu}. For the latter case, observe that for $y\in Q^4$, there is a line in $Q^4$ passing through $x$ and $y$ iff $y\in H_x\cap Q^4$. As a consequence, for $y\in Q^4$ in general position, there is no line in $Q^4$ passing through $x$ and $y$. So we have $\<L_0,\frac12H^4\>_{\ell}^{X_0}=0$.
\end{proof}

\begin{lemma}\label{lemnonzero5}
	The only non-zero, degree-$(2\ell-e)$, two-point invariants with insertions in $\mathcal{B}_0$ is
	\[
	\<\frac12H^4,\frac12H^4\>_{2\ell-e}^{X_0}=1.
	\]
\end{lemma}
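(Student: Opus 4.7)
The plan is to imitate the proof of Lemma \ref{lemnonzero4}: restrict to the only potentially nonzero insertion pair via a dimension count, then evaluate that invariant using the blow-up formula \cite[Theorem 1.2]{Hu}.

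First I would carry out the dimension count. Since $c_1(X_0)=4H-3E_0$, we have $c_1(X_0)\cdot(2\ell-e)=8-3=5$, so the virtual dimension of $\overline{M}_{0,2}(X_0,2\ell-e)$ equals $4+5+2-3=8$. The largest complex codimension of a basis element in $\mathcal{B}_0$ is $4$, attained uniquely by $\frac{1}{2}H^4$. Hence the only pair $(B_1,B_2)\in\mathcal{B}_0\times\mathcal{B}_0$ with $\deg B_1+\deg B_2=8$ is $(\frac{1}{2}H^4,\frac{1}{2}H^4)$, and every other two-point invariant of class $2\ell-e$ vanishes on dimensional grounds.

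Next, to evaluate $\<\frac{1}{2}H^4,\frac{1}{2}H^4\>_{2\ell-e}^{X_0}$, I would apply Hu's blow-up formula to $\pi_1\colon X_0\to Q^4$ exactly as in Lemma \ref{lemnonzero4}. Since $\frac{1}{2}H^4=\pi_1^*(\frac{1}{2}H^4)$ is the pullback of the point class on $Q^4$, and since a generic class-$(2\ell-e)$ rational curve in $X_0$ is the strict transform of a smooth conic in $Q^4$ meeting the blow-up center $x$ transversally at one point, Hu's formula identifies the two-point invariant on $X_0$ with the three-point invariant on $Q^4$ carrying a third insertion that records the condition of passing through $x$:
\[
\<\tfrac{1}{2}H^4,\tfrac{1}{2}H^4\>_{2\ell-e}^{X_0}
=\<\tfrac{1}{2}H^4,\tfrac{1}{2}H^4,\tfrac{1}{2}H^4\>_{2\ell}^{Q^4}=1,
\]
the last equality being the final entry of the table of three-point invariants of $Q^4$ recalled in Section 2.

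The main point requiring care is verifying that Hu's formula applies here without correction terms. This reduces to checking that $2\ell-e$ has exceptional multiplicity one along $E_0$ and that both insertions impose top-codimension conditions on $X_0$, so that no boundary contributions from reducible curves through $E_0$ intrude in the comparison. Once this essentially formal check is in place, the reduction to $Q^4$ is clean and the lemma follows.
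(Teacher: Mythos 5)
Your argument is correct and is essentially the paper's proof: the paper likewise disposes of all other insertion pairs by the dimension constraint and evaluates $\<\frac12H^4,\frac12H^4\>_{2\ell-e}^{X_0}$ by Hu's blow-up formula, reducing it to $\<\frac12H^4,\frac12H^4,\frac12H^4\>_{2\ell}^{Q^4}=1$. One small correction: since the class $2\ell-e$ has exceptional multiplicity one, the relevant statement is \cite[Theorem 1.4]{Hu} (the version producing the extra point insertion, which is exactly the formula you write down), not Theorem 1.2, which is what Lemma \ref{lemnonzero4} uses for the class $\ell$ disjoint from the exceptional divisor.
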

\begin{proof}
Use the dimension constraint and blow-up formula \cite[Theorem 1.4]{Hu}.
\end{proof}

In general, there are very few tools to study the eigenvalues of linear operators on a vector space. One remarkable tool is the Frobenius-Perron theory on irreducible nonnegative matrices \cite{Perr, Frob} (see e.g. \cite{Minc}), provided that a \textit{good} basis of the vector space could be found.
Now we consider the operator $\hat c_1(X_0)$, where we recall $c_1(X_0)=4H-3E_0$.
\begin{thm}\label{thmQ4point}
    Both  Conjecture \ref{conj} and
  Conjecture $\mathcal{O}$ hold for the blow-up  $X_0=Bl_{\mathbb{P}^0} {Q}^4$.
\end{thm}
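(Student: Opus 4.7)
The plan is to compute the $9\times 9$ matrix $M$ of $\hat c_1(X_0)$ in a well-chosen basis of $H^\bullet(X_0)$ and then read off both conjectures from standard spectral estimates. By the divisor axiom,
\begin{equation*}
c_1(X_0)\star\phi=c_1(X_0)\cup\phi+\sum_{\mathbf{d}\neq 0}\sum_i (c_1(X_0)\cdot\mathbf{d})\,\langle \phi,\phi_i\rangle^{X_0}_\mathbf{d}\,\phi^i\,q^\mathbf{d},
\end{equation*}
so the only curve classes contributing are $e,\ell-e,\ell,2\ell-e$ by Lemmas \ref{lemnonzero1}--\ref{lemnonzero5}. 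The intersection numbers $c_1(X_0)\cdot e=3$, $c_1(X_0)\cdot(\ell-e)=1$, $c_1(X_0)\cdot\ell=4$, $c_1(X_0)\cdot(2\ell-e)=5$ together with the tabulated two-point invariants make the matrix entries after setting $\mathbf{q}=\mathbf{1}$ a purely mechanical bookkeeping exercise using the products in $\mathcal{B}_0$.

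Next, I would modify $\mathcal{B}_0$ by flipping signs of the exceptional classes (for instance replacing $E_0,L_0$ by $-E_0,-L_0$), so that every entry of $M$ becomes nonnegative. This is possible because the negative contributions in the classical and quantum products come from identities like $E_0\cup E_0=-P_0$, $E_0\cup L_0=-\tfrac12 H^4$, etc., whose signs are absorbed after renormalising both the basis and its Poincar\'e dual. After this bookkeeping, I expect $M$ to be a nonnegative matrix whose associated directed graph is strongly connected (the Schubert part is linked by cup product with $H$, the exceptional part is linked to the Schubert part via the $(\ell-e)$-invariants of Lemma \ref{lemnonzero2}, and the $(2\ell-e)$-invariant loops the point class back), and whose period is $1$, hence primitive.

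The rest then falls into place. Perron's theorem on primitive nonnegative matrices gives that $\rho:=\rho(\hat c_1(X_0))$ is a simple eigenvalue and is the unique eigenvalue of maximum modulus. Since the Fano index of $X_0$ equals $\gcd(4,3)=1$ from $c_1(X_0)=4H-3E_0$, condition (2) of Conjecture $\mathcal{O}$ reduces to this uniqueness, and Conjecture $\mathcal{O}$ follows. For Conjecture \ref{conj} one computes the characteristic polynomial $p(\lambda)$ of $M$ and verifies the sign inequality $p(4\sqrt{2})<0$ (while $p(\lambda)\to +\infty$ as $\lambda\to +\infty$), yielding a real eigenvalue strictly greater than $4\sqrt{2}=\rho(\hat c_1(Q^4))$ by Proposition \ref{proprhoQ}.

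The main obstacle is the choice of sign-modified basis: one must orient $\mathcal{B}_0$ so that \emph{simultaneously} $M$ has nonnegative entries and its digraph is strongly connected, since a wrong sign flip could either introduce a negative entry or break irreducibility. Once the correct choice is made, the Perron--Frobenius application is immediate and the final comparison $\rho>4\sqrt{2}$ is a short numerical check on a polynomial of degree nine.
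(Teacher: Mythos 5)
Your route coincides with the paper's up to the Perron step: the same lemmas pin down the contributing curve classes $e$, $\ell-e$, $\ell$, $2\ell-e$, the divisor axiom with $c_1(X_0)\cdot e=3$, $c_1(X_0)\cdot(\ell-e)=1$, $c_1(X_0)\cdot\ell=4$, $c_1(X_0)\cdot(2\ell-e)=5$ produces the matrix $M$, and your final argument for Conjecture~\ref{conj} (the characteristic polynomial is negative at $4\sqrt{2}$ and tends to $+\infty$, so there is a real eigenvalue exceeding $\rho(\hat c_1(Q^4))=4\sqrt{2}$ by Proposition~\ref{proprhoQ}) is exactly the paper's.

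The gap is your central claim that a sign modification of $\mathcal{B}_0$ makes $M$ entrywise nonnegative and primitive. This is impossible. In the paper's basis $\hat{\mathcal{B}}_0$ the diagonal of $M$ is $(0,-1,0,-1,-1,0,0,-1,0)$, so $\mathrm{tr}(\hat c_1(X_0))=-4$; the negative diagonal entries are forced by the quantum corrections, e.g.\ the coefficient of $H$ in $c_1(X_0)\star H|_{\mathbf{q}=\mathbf{1}}=4\wp_++4\wp_--H+2(H-E_0)$. Since the trace is a similarity invariant and every entrywise nonnegative matrix has nonnegative trace, \emph{no} basis of $H^\bullet(X_0)$ represents $\hat c_1(X_0)$ by a nonnegative matrix; in particular, flipping signs of basis vectors only conjugates $M$ by a diagonal $\pm1$ matrix and does not change the diagonal at all. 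So the "main obstacle" you identify (choosing the right orientation) has no solution, and Perron--Frobenius cannot be applied to $M$ itself. The paper's workaround is to accept a matrix with negative entries and check that the odd power $M^{17}$ is entrywise positive; Perron's theorem applied to $M^{17}$, combined with the fact that $M$ is real so that nonreal eigenvalues come in conjugate pairs, shows that $\rho(M)=\rho(M^{17})^{1/17}$ is a simple eigenvalue of $M$ strictly dominating all others in modulus, which yields Conjecture $\mathcal{O}$ since the Fano index is $1$ (your computation of the index is fine). To repair your argument, replace the sign-normalization step by this passage to a positive odd power of $M$, or apply the generalized Frobenius--Perron theorem of \cite{HKLY} to $M^{16}$.
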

\begin{proof}
 Instead of the bases $\mathcal{B}_0$ and $\mathcal{B}_0'$, we consider another basis of  $H^*(X_0,\mathbb{Z})$ given by
$$\hat{\mathcal{B}}_0:=\{\mathbf{1}, H, H-E_0, \wp_+, \wp_-, \wp_++\wp_--P_0, {1\over 2}H^3, {1\over 2}H^3-L_0, {1\over 2}H^4\}. $$
By Lemmas \ref{lemnonzero1}, \ref{lemnonzero2}, \ref{lemnonzero3}, \ref{lemnonzero4} and \ref{lemnonzero5}, we have
\begin{align*}
  c_1(X_0)\star H|_{\mathbf{q}=\mathbf{1}} &= c_1(X_0)\cup H+\langle c_1(X_0), H, -L_0\rangle_{\ell-e}^{X_0}E_0+ \langle c_1(X_0), H, {1\over 2}H^3\rangle_{\ell-e}^{X_0}H\\
   &=4\wp_++4\wp_-+(c_1(X_0).(\ell-e))(\langle  H, -L_0\rangle_{\ell-e}^{X_0}E_0+ \langle H, {1\over 2}H^3\rangle_{\ell-e}^{X_0}H)\\
   &=4\wp_++4\wp_--H+2(H-E).
  \end{align*}
  Here the second equality follows from  the divisor axiom for Gromov-Witten invariants (see e.g. \cite{CoKa}) and the computation $c_1(X_0).(\ell-e)=(4H-3E_0).(\ell-e)=1$. Similarly, we can
  calculate  the product  $c_1(X_0)\star \alpha|_{\mathbf{q}=\mathbf{1}}$ for all $\alpha\in \hat{\mathcal{B}}_0$. As a result, we obtain
  the associated matrix $M$ in  $\hat c_1(X_0)\hat{\mathcal{B}}_0=\hat{\mathcal{B}}_0 M$ with
 $$M=\left(
    \begin{array}{ccccccccc}
0& 0& 0& 0& 0& 0& 4& 4& 5\\1& -1& 0& 0& 0& 0& 0& 3& 4\\3& 2& 0& 0& 0& 0& 0& -3& 0\\0& 4& 1& -1& 0& 0& 0& 0& 0\\0& 4& 1& 0& -1& 0& 0& 0& 0\\0& 0& 3& 1& 1& 0& 0& 0& 0\\0& 0& 0& 4& 4& 5& 0& 0& 0\\0& 0& 0& 0& 0& 3& 1& -1& 0\\0& 0& 0& 0& 0& 0& 4& 1& 0
    \end{array}
  \right).
$$

By direct calculations, the power $M^{17}$ is a positive matrix. Thus by Perron's theorem on positive matrices \cite{Perr}, the spectral radius $\rho(M^{17})$ is a simple eigenvalue of $M^{17}$, and the modulus of any other eigenvalue of $M^{17}$ is strictly less than $\rho(M^{17})$. Since $M$ is a real matrix, if $\lambda$ is an eigenvalue of
$M$, so is the conjugate $\bar \lambda$. It follows that    $\rho(M)=(\rho(M^{17}))^{1\over 17}$ is a simple eigenvalue of $M$, and the modulus of any other eigenvalue of $M$ is strictly less than $(\rho(M^{17}))^{1\over 17}$. Hence, Conjecture $\mathcal{O}$ holds for $X_0$. (Here we notice the Fano index of $X_0$ is equal to $1$, since
 $c_1(X_0).(\ell-e)=1$.)

Notice that $\det(\lambda I_{9}-M)|_{\lambda=4\sqrt{2}}<0$. Hence, there exists a real eigenvalue $\lambda_0$ in the interval $(4\sqrt{2}, +\infty)$. Hence, by Proposition \ref{proprhoQ}, we have
 $$\rho(\hat c_1(X_0))=\rho(M)\geq \lambda_0>4\sqrt{2}=\rho(\hat c_1(Q^4)).$$
That is, Conjecture \ref{conj} holds for $Y=Q^4$ and $Z=\mathbb{P}^0$.
\end{proof}
\begin{remark}
To prove Conjecture $\mathcal{O}$, one can also apply the generalized Frobenius-Perron theorem \cite[Theorem 3.2]{HKLY} to $M^{16}$,
 which  is the first power of $M$ that becomes a positive matrix.
\end{remark}
\section{Quantum cohomology of $Bl_{\bP^2}Q^4$}
In this section, we will study the quantum cohomology of the blow-up $X_2=Bl_{\rm \mathbb{P}^2}Q^4$ of $Q^4$ along a projective plane $\bP^2$ inside it, and verify Conjecture $\mathcal{O}$ and Conjecture \ref{conj} for $X_2$.

\subsection{Classical cohomology}
A geometric construction of $X_2$ is similar to that of $X_0$. Precisely, we let $\bP_+^2$ and $\bP^2_-$ be two disjoint $2$-planes in $Q^4$ such that $P.D.[\bP^2_\pm]=\wp_\pm\in H^4(Q^4)$. Then they represent different rulings by $2$-planes of $Q^4$, and
\[
N_{\bP^2_\pm|Q^4}\cong\Omega_{\bP_\pm^2}(2),
\]
where $\Omega$ denotes the cotangent sheaf.

   Consider the rational map $f': \bP^5\dashrightarrow \bP^2_-$, given by the projection from $\bP_+^2$, which  defines a graph $\Gamma_{f'}\subset\bP^5\times \bP^2_-$ by the Zariski closure of $\{(y, f'(y))\mid y\in \bP^5\setminus\bP^2_+\}$.
By abuse of notation, we consider the natural projections $\Gamma_{f'}\xrightarrow{p_1}\bP^5$ and $\Gamma_{f'}\xrightarrow{p_2}\bP^2_-$ as well as their restrictions
  $\pi_i:=p_i|_{X_2}$ with $X_2$ being the strict transform of $Q^4$ (i.e. the Zariski closure of $p_1^{-1}(Q^4\setminus \bP^2_+)$). Then  $\pi_1: X_2\to Q^4$  is the blow-up of $Q^4$ along $\bP_+^2$, and $\pi_2: X_2\to\bP_-^2$ endows $X_2$ with the projective bundle structure $\bP_{\bP_-^2}(\Omega_{\bP_-^2}(2)\oplus\cO_{\bP_-^2})$ \cite{SzWi}, which can be viewed as the projective compactification of
\(
Q^4\setminus\bP_+^2=N_{\bP^2_-|Q^4}\cong\Omega_{\bP_-^2}(2).
\)
The divisor  $E_2:=\bP_{\bP_-^2}(\Omega_{\bP_-^2}(2)\oplus\{0\})\subset\bP_{\bP_-^2}(\Omega_{\bP_-^2}(2)\oplus\cO_{\bP_-^2})$ is the exceptional divisor of the blow-up $X_2\xrightarrow{\pi_1}Q^4$.

The integral cohomology $H^*(X_2, \mathbb{Z})$ has a basis $\mathcal B_2$ arising from the blow-up  $X_2\xrightarrow{\pi_1}Q^4$, given by
\[
\mathcal{B}_2=\{\mathbf{1},H,\wp_+,\wp_-,\frac12H^3,\frac12H^4,E_2,S_2, L_2\};
\]
its dual basis $\mathcal{B}_2^\vee$ of $H^*(X_2, \mathbb{Z})$ with respect to the Poincar\'e pairing is given by
\[
\mathcal{B}_2^\vee=\{\frac12H^4,\frac12H^3,\wp_+,\wp_-,H,\mathbf{1},-L_2,-S_2,-E_2\}.
\]
Here $E_2\in H^2(X_2, \mathbb{Z})$ denotes the (cohomology) class of the exceptional divisor $E_2$ by abuse of notation,   $L_2$ is the  class of  a fiber in $E_2=\bP_{\bP_-^2}(\Omega_{\bP_-^2}(2)\oplus\{0\})$, and $S_2$ is the class of the preimage of a line in $\bP^2_+$ of $\pi_1$. 
By direct calculations, the non-zero cup products in $\mathcal B_2\setminus\{\mathbf{1}\}$ are given as follows.
\begin{align*}
H\cup H&=\wp_++\wp_-, &\wp_+\cup E_2&=L_2,\\
H\cup\wp_\pm&=\frac12H^3, & E_2\cup E_2&=-\wp_++S_2,\\
H\cup\frac12H^3&=\frac12H^4, &E_2\cup S_2&=-\frac12H^3+L_2,\\
H\cup E_2&=S_2, &E_2\cup L_2&=-\frac12H^4,\\
H\cup S_2&=L_2, &S_2\cup S_2&=-\frac12H^4.
\end{align*}
 Note that $X_2$ admits the projective bundle structure $X_2=\bP_{\bP_-^2}(\Omega_{\bP_-^2}(2)\oplus\cO_{\bP_-^2})\xrightarrow{\pi_2}\bP_-^2$. Hence, $H^*(X_2, \mathbb{Z})$ is generated by $H^2(X_2, \mathbb{Z})$. By the Leray-Hirsch theorem, we see that $H^*(X_2, \mathbb{Z})$ has another $\bZ$-basis $\mathcal B_2'$ given by
\[
\mathcal{B}_2'=\{\mathbf1,H_-,H_-^2,E_2,E_2H_-,E_2H_-^2,E_2^2,E_2^2H_-,E_2^2H_-^2\}.
\]
Here $H_-\in H^2(X_2,\mathbb{Z})$ is the pullback of the hyperplane class in $H^2(\bP^2_-, \mathbb{Z})$ via $\pi_2^*$.
 Let $\tilde f$ be the homology class of a fiber of $\pi_2$. Then $\pi_2$ is the contraction corresponding to the extremal ray $\bR_{\geq0}\tilde f$, and
\(
\tilde f=\ell-e.
\)
So we have
\[
H.\tilde f=E_2.\tilde f=1.
\]
Since $H_-.\tilde f=0$, it follows that we can write
\(
H_-=a(H-E_2)\mbox{ for some }a\in\bZ.
\)
Moreover, take a line in $\bP_-^2\subset Q^4$. Then the intersection number of $(\pi_1)_*H_-$ and this line is one. So we see that
\(
(\pi_1)_*H_-.\ell=1,
\)
which implies that
\(
H_-=H-E_2.
\)
In sum, we have
\[
\left\{
\begin{array}{cl}
\mathbf1&=\mathbf{1},\\
H_-&=H-E_2,\\
H_-^2&=\wp_--S_2,\\
E_2&=E_2,\\
E_2H_-&=\wp_+,\\
E_2H_-^2&=\frac12H^3-L_2,\\
E^2_2&=S_2-\wp_+,\\
E^2_2H_-&=L_2,\\
E^2_2H_-^2&=\frac12H^4,
\end{array}
\right.\mbox{ and }
\left\{
\begin{array}{cl}
\mathbf{1}&=\mathbf1,\\
H&=H_-+E_2,\\
\wp_+&=E_2H_-,\\
\wp_-&=H_-^2+E_2H_-+E_2^2,\\
\frac12H^3&=E_2H_-^2+E^2_2H_-,\\
\frac12H^4&=E^2_2H_-^2,\\
E_2&=E_2,\\
S_2&=E_2H_-+E^2_2,\\
L_2&=E^2_2H_-.
\end{array}
\right.
\]

\subsection{Two-point invariants}
Similar to the case of $X_0$, we study genus zero, two-point Gromov-Witten invariants of $X_2$, and obtain the following lemmas.

\begin{lemma}\label{lemma-blowupofQ4alongP2}
	The only non-zero, degree-$ke$ with $k\geq1$, two-point invariants with insertions in $\mathcal{B}_2$ are
	\[\<E_2,L_2\>_e^{X_2}=\<S_2,S_2\>_e^{X_2}=1.\]
\end{lemma}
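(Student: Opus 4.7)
The plan is to adapt the argument of Lemma~\ref{lemma-anotherblowupofQ4atonepoint} to the present setting. The class $e$ is the fiber class of the $\bP^1$-bundle $\bar\pi_1 := \pi_1|_{E_2} : E_2 \to \bP^2_+$, so $\pi_1 : X_2 \to Q^4$ is the contraction of the extremal ray $\bR_{\geq 0}e$ and any stable map of class $ke$ factors through $\iota : E_2 \hookrightarrow X_2$. Setting $R := R^1 \pi_* ev_3^* N_{E_2|X_2}$ for the forgetful morphism $\pi : \overline{M}_{0,3}(E_2, ke) \to \overline{M}_{0,2}(E_2, ke)$, the standard virtual class comparison
\[
\iota_* \bigl( \mathbf{e}(R) \cap [\overline{M}_{0,2}(E_2, ke)]^{\rm virt} \bigr) = [\overline{M}_{0,2}(X_2, ke)]^{\rm virt}
\]
reduces every invariant $\<B_1, B_2\>_{ke}^{X_2}$ to an integral on $\overline{M}_{0,2}(E_2, ke)$ against $\iota^* B_1$, $\iota^* B_2$, and $\mathbf{e}(R)$.

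Since a curve of class $ke$ sits inside a single fiber of $\bar\pi_1$, one has $\bar\pi_1 \circ ev_1 = \bar\pi_1 \circ ev_2 =: f$, and I would push the integral down to $\bP^2_+$ by $f_*$. Setting $\xi := c_1(N_{E_2|X_2})$ and $h := \bar\pi_1^* (H|_{\bP^2_+})$, the relation $\xi^2 + c_1(N_{\bP^2_+|Q^4})\xi + c_2(N_{\bP^2_+|Q^4}) = 0$ on $E_2 = \bP(N_{\bP^2_+|Q^4})$ allows each $\iota^* B_i$ to be written in the form $\bar\pi_1^* \underline{B_i^{(0)}} + \bar\pi_1^* \underline{B_i^{(1)}} \cup \xi$ with $\underline{B_i^{(j)}} \in H^*(\bP^2_+)$; the relevant values $\iota^* E_2 = \xi$, $\iota^* H = h$, $\iota^* S_2 = h\xi$, $\iota^* L_2 = h^2 \xi$, and so on, are read off from the cup product table for $\mathcal{B}_2$. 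A dimension count analogous to that in the proof of Lemma~\ref{lemma-anotherblowupofQ4atonepoint} --- now with $c_1(X_2).e = 1$, so that $\overline{M}_{0,2}(X_2, ke)$ has virtual dimension $3+k$, and with $\dim \bP^2_+ = 2$ --- then forces $k = 1$ and leaves only a handful of candidate pairs $(B_1, B_2)$.

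For $k = 1$ the obstruction bundle $R$ vanishes, because $N_{E_2|X_2}$ restricts to $\cO_{\bP^1}(-1)$ on every fiber of $\bar\pi_1$ and $H^1(\bP^1, \cO(-1)) = 0$. Evaluating the remaining candidate invariants directly on $\overline{M}_{0,2}(E_2, e)$, which is built from $\overline{M}_{0,2}(\bP^1, 1)$ on each fiber of $\bar\pi_1$, should produce precisely $\<E_2, L_2\>_e^{X_2} = \<S_2, S_2\>_e^{X_2} = 1$, with the other candidates vanishing for parity or degree reasons. The main obstacle is simply the bookkeeping: unlike the $X_0$ case where the base of the exceptional divisor is a point, here the base is the surface $\bP^2_+$ and the relation defining $H^*(E_2)$ involves the non-trivial Chern classes of $N_{\bP^2_+|Q^4} \cong \Omega_{\bP^2_+}(2)$, so the restrictions $\iota^* B_i$ and the dimension count need to be tracked more carefully.
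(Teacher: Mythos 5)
Your proposal follows essentially the same route the paper intends: its proof of this lemma is the single sentence that the argument is ``completely similar'' to that of Lemma~\ref{lemma-anotherblowupofQ4atonepoint}, and your reduction to $E_2$, the virtual class comparison with $R=R^1\pi_*ev_3^*N_{E_2|X_2}$, the pushforward to the base, the dimension count forcing $k=1$, and the vanishing of the obstruction for $k=1$ are exactly the intended steps. Two small points: the relevant computation in Lemma~\ref{lemma-anotherblowupofQ4atonepoint} is over $E'\to Q^2_0$, whose base is also a surface (not a point), so the bookkeeping there is of the same kind; and the ``parity or degree reasons'' you defer amount to checking that $\iota^*\wp_-=0$ (since $\bP^2_-$ is disjoint from the center), $\iota^*\wp_+=\bar\pi_1^*c_2(N_{\bP^2_+|Q^4})$ and $\iota^*\tfrac12H^3=0$ have no $\xi$-component, so that after fiber integration only the pairs $(E_2,L_2)$ and $(S_2,S_2)$ survive, each contributing $1$.
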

\begin{proof}
The argument is completely similar to that of Lemma \ref{lemma-anotherblowupofQ4atonepoint}.
\end{proof}

\begin{lemma}
A curve class in $H_2(X_2, \mathbb{Z})$ admits non-zero two-point invariants only if it belongs to
	\(\{e,\ell-e,\ell,\ell+e\}.\)
\end{lemma}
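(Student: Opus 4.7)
The strategy mirrors the proof of Lemma \ref{lemnonzero1}. The key ingredients are the first Chern class of $X_2$, a description of the Mori cone, the dimension constraint for two-point Gromov--Witten invariants, and Lemma \ref{lemma-blowupofQ4alongP2} to rule out higher multiples of $e$.

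First I would record that since $\pi_1 \colon X_2 \to Q^4$ is the blow-up of a codimension-$2$ smooth submanifold, $c_1(X_2) = 4H - E_2$. The Mori cone of $X_2$ has exactly two extremal rays: $\mathbb R_{\geq 0}\, e$, contracted by $\pi_1$, and $\mathbb R_{\geq 0}\,\tilde f = \mathbb R_{\geq 0}(\ell - e)$, contracted by the projective bundle $\pi_2 \colon X_2 \to \mathbb P^2_-$. Hence every effective curve class has the form $\mathbf d = a(\ell - e) + b e$ with $a,b \in \mathbb Z_{\geq 0}$. Using $H.(\ell-e)=1$, $E_2.(\ell-e)=1$, $H.e=0$, $E_2.e=-1$, one computes $\int_{\mathbf d} c_1(X_2) = 3a + b$.

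Next I would apply the dimension constraint. For $\alpha, \beta$ homogeneous elements of $H^*(X_2)$, non-vanishing of $\langle \alpha, \beta \rangle_{\mathbf d}^{X_2}$ forces the complex degrees of the insertions to satisfy
\[
\deg \alpha + \deg \beta = \dim X_2 + \int_{\mathbf d} c_1(X_2) - 1 = 3 + (3a+b).
\]
Since $\deg \alpha, \deg \beta \leq 4$, we must have $3a + b \leq 5$. Combined with $(a,b) \neq (0,0)$ (as $\overline M_{0,2}(X_2, 0)$ is empty), the only possibilities are
\[
\mathbf d \in \{e,\, 2e,\, 3e,\, 4e,\, 5e,\; \ell - e,\, \ell,\, \ell + e\}.
\]

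Finally, Lemma \ref{lemma-blowupofQ4alongP2} tells us that among classes $k e$ with $k \geq 1$, only $k=1$ admits non-zero two-point invariants. This eliminates $2e, 3e, 4e, 5e$ from the list, leaving exactly $\{e, \ell - e, \ell, \ell + e\}$, as required. The most delicate point is simply to confirm the Mori cone description — once the two extremal contractions $\pi_1$ and $\pi_2$ are identified, the rest is bookkeeping, so no genuine obstacle is expected.
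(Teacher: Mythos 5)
Your proposal is correct and follows the same route as the paper: write an effective class as $a(\ell-e)+be$ with $a,b\in\mathbb{Z}_{\geq 0}$, use $c_1(X_2)=4H-E_2$ and the dimension constraint to get $3a+b\leq 5$ with $(a,b)\neq(0,0)$, and then invoke the lemma on degree-$ke$ invariants to discard $be$ for $2\leq b\leq 5$. The only difference is that you spell out the Mori cone description and the computation $\int_{\mathbf d}c_1(X_2)=3a+b$, which the paper leaves implicit.
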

\begin{proof}
An effective curve class has the form
\[
a(\ell-e)+be,\quad a,b\in\bZ_{\geq0}.
\]
Notice $c_1(X_2)=4H-E_2$ (see e.g. \cite[Chapter 4, Section 6]{GrHa}). Using the dimension constraint, we see that $a(\ell-e)+be$ admits non-zero two-point invariants only if
\[
3a+b\leq5\textrm{ and }(a,b)\neq(0,0).
\]
Note that $(a,b)\neq(0,0)$ since the space $\overline M_{0,2}(X_0,0)$ is empty. From Lemma \ref{lemma-blowupofQ4alongP2}, we exclude the cases $be$ for $2\leq b\leq5$.
	\end{proof}

\begin{lemma}
	The non-zero, degree-$(\ell-e)$, two-point invariant with insertions in $\mathcal{B}_2$ are
	\begin{align*}
	&\<\wp_-,\frac12H^4\>^{X_2}_{\ell-e}=\<E_2^2,\frac12H^4\>^{X_2}_{\ell-e}=1,\\
	&\<\frac12H^3,\frac12H^3\>^{X_2}_{\ell-e}=\<\frac12H^3,L_2\>^{X_2}_{\ell-e}=\<L_2, L_2\>^{X_2}_{\ell-e}=1.
	\end{align*}
\end{lemma}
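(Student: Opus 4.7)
My plan is to mirror the proof of Lemma \ref{lemma-anotherblowupofQ4atonepoint}, exploiting the projective bundle structure $\pi_2: X_2 = \bP_{\bP_-^2}(\Omega_{\bP_-^2}(2) \oplus \cO_{\bP_-^2}) \to \bP_-^2$. Since the fiber class of $\pi_2$ coincides with $\tilde f = \ell - e$, every stable map of class $\ell - e$ factors through some $\bP^2$-fiber $F$ of $\pi_2$, and the normal bundle $N_{F|X_2}$ is trivial (pulled back from the tangent space at a point of $\bP_-^2$). Thus $H^1(\bP^1, u^* N_{F|X_2}) = 0$ for any map $u:\bP^1 \to F$, the obstruction bundle is trivial, and $\overline M_{0,2}(X_2, \ell-e)$ is an honest fiber bundle over $\bP_-^2$ with fiber $\overline M_{0,2}(\bP^2, 1)$ via the morphism $f := \pi_2 \circ ev_i$ (independent of $i$, since a stable map of class $\ell - e$ has connected image in a single fiber).

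With the virtual class agreeing with the usual fundamental class, the projection formula yields
\begin{align*}
\<B_1, B_2\>_{\ell-e}^{X_2} \;=\; \int_{\bP_-^2} f_*\bigl(ev_1^* B_1 \cup ev_2^* B_2\bigr).
\end{align*}
The basis $\mathcal{B}_2'$ is well-adapted here: $H_-$ pulls back from $\bP_-^2$, while $E_2$ restricts to the hyperplane class $h$ on each $\bP^2$-fiber. So for basis vectors $B_i = E_2^{k_i} H_-^{a_i}$, the integral factors as $\bigl(\int_{\bP_-^2} h_-^{a_1+a_2}\bigr) \cdot \<h^{k_1}, h^{k_2}\>_1^{\bP^2}$. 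The $\bP^2$ line invariant equals $1$ precisely when $k_1 = k_2 = 2$ and vanishes otherwise, while $\int_{\bP_-^2} h_-^{a_1+a_2}$ equals $1$ iff $a_1 + a_2 = 2$. Hence a pair of $\mathcal{B}_2'$-basis vectors contributes $1$ exactly when both carry the relative hyperplane to the top power $E_2^2$ and together carry $H_-^2$ from the base.

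Expanding each insertion $B_i$ in $\mathcal{B}_2'$ by the dictionary recorded in the excerpt (notably $\wp_- = H_-^2 + E_2 H_- + E_2^2$, $\tfrac12 H^3 = E_2 H_-^2 + E_2^2 H_-$, $\tfrac12 H^4 = E_2^2 H_-^2$, $L_2 = E_2^2 H_-$, and $E_2^2 = S_2 - \wp_+$), I would read off the listed invariants by identifying the $E_2^2 H_-^a$-components in each $B_i$ and matching them against $a_1 + a_2 = 2$; all other pairs satisfying the virtual dimension constraint $\deg B_1 + \deg B_2 = 6$ admit no $E_2^2$-to-$E_2^2$ match and hence give zero. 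The main obstacle is simply the bookkeeping of the change of basis between $\mathcal{B}_2$ and $\mathcal{B}_2'$; in contrast to Lemma \ref{lemma-anotherblowupofQ4atonepoint}, no nontrivial Euler class of an obstruction bundle intervenes, which makes the computation essentially formal once the projective bundle reduction is in place.
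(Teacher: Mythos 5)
Your proposal is correct and takes essentially the same route as the paper: the paper likewise reduces to the $\bP^2$-bundle $\pi_2:X_2\to\bP_-^2$ (invoking "similar arguments" to its Lemma on $E'\subset X_0$), finds that the only non-zero degree-$(\ell-e)$ two-point invariants in $\mathcal B_2'$ are $\<E_2^2,E_2^2H_-^2\>=\<E_2^2H_-,E_2^2H_-\>=1$, and then converts to $\mathcal B_2$ by the recorded change of basis. Your explicit observations that the fiber class forces every stable map of class $\ell-e$ into a single fiber, that the normal bundle of a fiber is trivial so no obstruction class appears, and that the count reduces to $\<h^2,h^2\>_1^{\bP^2}=1$ weighted by $\int_{\bP_-^2}h_-^{a_1+a_2}$ are exactly the details the paper leaves implicit.
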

\begin{proof}
Note that $X_2$ admits a projective bundle structure $X_2=\bP_{\bP_-^2}(\Omega_{\bP_-^2}(2)\oplus\cO_{\bP_-^2})\xrightarrow{\pi_2}\bP_-^2$, and $\ell-e$ is the homology class of a line in a fiber of $\pi_2$. By using similar arguments as in the proof of Lemma \ref{lemma-anotherblowupofQ4atonepoint}, one can prove that the only non-zero, degree-$(\ell-e)$, two-point invariant with insertions in $\mathcal{B}'_2$ are
\[
\<E^2_2,E^2_2H_-^2\>^{X_2}_{\ell-e}=\<E^2_2H_-,E_2^2H_-\>^{X_2}_{\ell-e}=1.
\]
So we can use this to determine invariants with insertions in $\mathcal B_2$. For example,
\[
\<\wp_-,\frac12H^4\>^{X_2}_{\ell-e}=\<H_-^2+E_2H_-+E_2^2,E_2^2H_-^2\>^{X_2}_{\ell-e}=1.
\]
We leave the rest cases to interested readers.
\end{proof}

\begin{lemma}
	The only non-zero, degree-$\ell$, two-point invariant with insertions in $\mathcal{B}_2$ is
	\[
	\<\frac12H^3,\frac12H^4\>^{X_2}_{\ell}=1.
	\]
\end{lemma}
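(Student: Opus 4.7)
The plan is to mimic the strategy used in Lemma~\ref{lemnonzero4}. First I compute $c_1(X_2)\cdot \ell = (4H - E_2)\cdot \ell = 4$, so that the virtual complex dimension of $\overline{M}_{0,2}(X_2,\ell)$ equals $\dim X_2 + c_1(X_2)\cdot \ell - 1 = 4+4-1 = 7$. The highest complex degree of a class in $\mathcal{B}_2$ is $4$, attained uniquely by $\tfrac12 H^4$. Hence the dimension constraint $\deg\alpha + \deg\beta = 7$ reduces the candidates (up to symmetry of the two insertions) to the pairs $(\tfrac12 H^3, \tfrac12 H^4)$ and $(L_2, \tfrac12 H^4)$.

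For $\langle \tfrac12 H^3, \tfrac12 H^4\rangle_\ell^{X_2}$, both insertions are pulled back from $Q^4$ via the blow-down $\pi_1\colon X_2 \to Q^4$, and $\ell$ is the class of the strict transform of a general line in $Q^4$ (one disjoint from the blow-up center $\mathbb{P}^2_+$). I would invoke Hu's blow-up formula \cite[Theorem~1.2]{Hu} exactly as in the proof of Lemma~\ref{lemnonzero4}: it identifies this invariant with $\langle \tfrac12 H^3, \tfrac12 H^4\rangle_\ell^{Q^4}$, which is $1$ by the table of degree-$\ell$ invariants of $Q^4$ recalled in the preliminaries.

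For $\langle L_2, \tfrac12 H^4\rangle_\ell^{X_2}$, I would prove vanishing by the geometric argument parallel to the second half of Lemma~\ref{lemnonzero4}. The key observation is that $\ell\cdot E_2 = 0$, so any irreducible rational curve of class $\ell$ is either contained in $E_2$ or disjoint from $E_2$; the former is excluded because the classes of curves in $E_2$ generate a sublattice that does not contain $\ell$, so every irreducible curve of class $\ell$ descends via $\pi_1$ to a line in $Q^4$ avoiding $\mathbb{P}^2_+$. On the other hand, $L_2$ is represented by a fiber of $E_2\to \mathbb{P}^2_-$, which lies in $E_2$. Hence no irreducible curve of class $\ell$ can simultaneously pass through a generic point representing $\tfrac12 H^4$ and meet a general representative of $L_2$, giving the enumerative vanishing.

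The anticipated obstacle is confirming the vanishing at the virtual rather than merely enumerative level. The only nontrivial effective splitting is $\ell = (\ell-e) + e$, with the $e$-component necessarily mapped into $E_2$, so in principle reducible stable maps of this type could contribute to the Gromov-Witten count. I expect this to be controllable by a dimension count on the boundary stratum in the spirit of the argument embedded in Lemma~\ref{lemma-blowupofQ4alongP2}: the locus of maps passing through a generic point eliminates the $e$-component from containing the corresponding marking, and the residual $(\ell-e)$-family is too small to also cover a generic cycle representing $L_2$. The same type of boundary analysis is implicit in the $X_0$ case of Lemma~\ref{lemnonzero4}.
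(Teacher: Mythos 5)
Your overall skeleton (dimension constraint reducing to the two pairs $(\tfrac12H^3,\tfrac12H^4)$ and $(L_2,\tfrac12H^4)$, blow-up formula for the first, geometric vanishing for the second) matches the paper, and the first half is fine except that the relevant reference for a blow-up along a \emph{surface} is \cite{Hu2}, not \cite[Theorem 1.2]{Hu} (the latter treats blow-ups along points and curves and is what the paper uses for $X_0$).

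The vanishing argument for $\<L_2,\tfrac12H^4\>^{X_2}_{\ell}$ contains a genuine error: the claim that the curve classes in $E_2$ span a sublattice not containing $\ell$ is false. The surface $\pi_1^{-1}(l)\subset E_2$ over a line $l\subset\bP^2_+$ is $\bP(N_{\bP^2_+|Q^4}|_l)=\bP(\cO_l(1)\oplus\cO_l)$, and the sections corresponding to the subbundle $\cO_l\subset\cO_l(1)\oplus\cO_l$ satisfy $H\cdot s=1$ and $E_2\cdot s=\deg(\cO_{E_2}(E_2)|_s)=0$, so they are irreducible curves of class exactly $\ell$ contained in $E_2$; consequently your conclusion that every irreducible $\ell$-curve descends to a line avoiding $\bP^2_+$ also fails. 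The vanishing survives only because such curves cannot pass through the generic point imposed by the insertion $\tfrac12H^4$, which is the part of your argument you should lean on instead. Separately, your treatment of the reducible configurations $\ell=(\ell-e)+e$ is left as an expectation rather than an argument. The paper's own proof sidesteps both issues at once: it pushes the stable map forward under $\pi_1$ to get a connected degree-one curve in $Q^4$ through the point $x=\pi_1(\Gamma)\in\bP^2_+$ (where $\Gamma$ is the fiber representing $L_2$, which $\pi_1$ contracts to a point) and through a general point $y$, and such a line exists only if $y\in H_x\cap Q^4$ --- false for general $y$. You should either adopt that pushforward argument or actually carry out the boundary dimension count you allude to.
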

\begin{proof}
By the dimension constraint, we only need to consider the case
\[
\<\frac12H^3,\frac12H^4\>_{\ell}^{X_2}\textrm{ and }\<L_2,\frac12H^4\>_{\ell}^{X_2}.
\]
For the former case, we use the blow-up formula \cite{Hu2}. For the latter case, observe that for $x\in\bP_+^2$ and $y\in Q^4$, there is a line in $Q^4$ passing through $x$ and $y$ iff $y\in H_x\cap Q^4$. As a consequence, for $x\in\bP_+^2$ and $y\in Q^4$ both in general position, there is no line in $Q^4$ passing through $x$ and $y$. So we have $\<L_2,\frac12H^4\>_{\ell}^{X_2}=0$.
\end{proof}

\begin{lemma}
	There is no non-zero, degree-$(\ell+e)$, two-point invariant with insertions in $\mathcal{B}_2$.
	\end{lemma}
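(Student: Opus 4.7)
The plan is to reduce the statement to a single candidate invariant via the dimension constraint, and then eliminate that candidate by a direct geometric argument on the blow-down $\pi_1\colon X_2\to Q^4$. Since $c_1(X_2)=4H-E_2$ gives $c_1(X_2)\cdot(\ell+e)=4-(-1)=5$, the virtual complex dimension of $\overline M_{0,2}(X_2,\ell+e)$ equals $\dim_{\mathbb{C}}X_2+c_1(X_2)\cdot(\ell+e)+2-3=8$, so a two-point invariant $\langle B_1,B_2\rangle_{\ell+e}^{X_2}$ can be non-zero only when $\deg_{\mathbb{R}} B_1+\deg_{\mathbb{R}} B_2=16$. Every class in $\mathcal B_2$ has real degree at most $8$, and the unique class of real degree $8$ is $\tfrac12 H^4$; hence the only invariant left to rule out is the point-to-point count $\langle \tfrac12 H^4,\tfrac12 H^4\rangle_{\ell+e}^{X_2}$.

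I would then show this invariant vanishes by exhibiting an empty moduli fiber over a generic point of $X_2\times X_2$. Fix generic $x_1,x_2\in X_2$ and set $y_i:=\pi_1(x_i)\in Q^4\setminus\bP^2_+$. For any stable map $f\colon C\to X_2$ of class $\ell+e$, the pushforward $(\pi_1\circ f)_*[C]=(\pi_1)_*(\ell+e)=\ell$ in $H_2(Q^4,\bZ)$ is non-zero, so at least one component of $C$ is not contracted by $\pi_1\circ f$ (every contracted component has class a non-negative multiple of $e$). Hence $\pi_1(f(C))$ contains an irreducible curve of $H$-degree $1$ in $Q^4\subset\bP^5$, namely a line $L\subset Q^4$, and any contracted components contribute only isolated image points. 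If $f$ passes through $x_1,x_2$, then $L$ must contain $y_1$ and $y_2$; but the Fano variety of lines in $Q^4$ is $5$-dimensional and the subfamily through any fixed point of $Q^4$ is only $2$-dimensional, so for generic $y_1,y_2\in Q^4$ no line contains both. This contradicts the existence of $f$.

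The anticipated obstacle is converting this empty set-theoretic evaluation fiber into a vanishing virtual integral, rather than leaving room for an excess contribution. The two point insertions impose exactly $16$ real conditions on a virtual class of real dimension $16$, so the expected and virtual dimensions match; representing $\tfrac12 H^4$ by a generic point cycle, the pull-back $(ev_1\times ev_2)^*(\{x_1\}\times\{x_2\})$ has empty support on $\overline M_{0,2}(X_2,\ell+e)$, and its integral against the virtual fundamental class is therefore zero by the standard deformation/transversality argument for point insertions on a Fano target. Combining this with the dimension reduction of the first paragraph, no two-point invariant of class $\ell+e$ with insertions in $\mathcal B_2$ is non-zero, as claimed.
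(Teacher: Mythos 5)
Your proposal is correct and follows essentially the same route as the paper: the dimension constraint (virtual dimension $4+5+2-3=8$) isolates the single candidate $\langle \tfrac12H^4,\tfrac12H^4\rangle_{\ell+e}^{X_2}$, which is then killed by pushing a class-$(\ell+e)$ stable map down to $Q^4$, where its image is a line, and observing that two general points of $Q^4$ (equivalently, $y_2\notin H_{y_1}\cap Q^4$) lie on no common line. Your extra care about contracted components and about converting the empty evaluation fiber into a vanishing virtual count only makes explicit what the paper leaves implicit.
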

\begin{proof}
By the dimension constraint, we only need to consider the case
\[
\<\frac12H^4,\frac12H^4\>_{\ell+e}^{X_2}.
\]
Observe that for $x, y\in Q^4$, there is a line in $Q^4$ passing through $x$ and $y$ iff $y\in H_x\cap Q^4$. As a consequence, for $x, y\in Q^4$ in general position, there is no line in $Q^4$ passing through $x$ and $y$. So we have $\<\frac12H^4,\frac12H^4\>_{\ell+e}^{X_2}=0$.
\end{proof}

Now we consider the operator $\hat c_1(X_2)$, where we recall $c_1(X_2)=4H-E_2$.
\begin{thm}\label{thmQ4P2}
    Both  Conjecture \ref{conj} and
  Conjecture $\mathcal{O}$ hold for the blow-up  $X_2=Bl_{\mathbb{P}^2} {Q}^4$.
\end{thm}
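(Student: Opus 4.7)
The plan is to follow the strategy used in the proof of Theorem \ref{thmQ4point} line-by-line, adapted to $X_2$. First, I would select a basis $\hat{\mathcal{B}}_2$ of $H^*(X_2,\mathbb{Z})$ modeled on $\hat{\mathcal{B}}_0$, for instance
\[
\hat{\mathcal{B}}_2 := \{\mathbf{1}, H, H-E_2, \wp_+, \wp_-, \wp_++\wp_--S_2, \tfrac12 H^3, \tfrac12 H^3 - L_2, \tfrac12 H^4\},
\]
where the modifications subtract off exceptional classes in order to absorb the negative cup-product contributions coming from $E_2$, $S_2$ and $L_2$, so that the matrix of $\hat c_1(X_2)$ has a chance of being nonnegative.

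Next, I would compute $c_1(X_2)\star\alpha|_{\mathbf{q}=\mathbf{1}}$ for every $\alpha\in\hat{\mathcal{B}}_2$, using the divisor axiom, the classical cup products tabulated in Section 4.1, and the two-point Gromov-Witten invariants established in the lemmas above. The only relevant nonzero curve classes are $e$, $\ell-e$ and $\ell$ (the class $\ell+e$ yields no invariant), with anticanonical degrees $c_1(X_2).e=1$, $c_1(X_2).(\ell-e)=3$ and $c_1(X_2).\ell=4$; in particular the Fano index of $X_2$ equals $1$. Re-expanding each output in the basis $\hat{\mathcal{B}}_2$ produces an integer $9\times 9$ matrix $M$ with $\hat c_1(X_2)\hat{\mathcal{B}}_2=\hat{\mathcal{B}}_2 M$.

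For Conjecture $\mathcal{O}$, I would check computationally that some power $M^k$ is a strictly positive matrix. Perron's theorem \cite{Perr} then supplies a simple spectral radius $\rho(M^k)$ strictly dominating the moduli of the other eigenvalues of $M^k$; since $M$ is real, $\rho(M)=\rho(M^k)^{1/k}$ is a simple real eigenvalue of $M$ strictly dominating all other eigenvalues in modulus, and combined with $s=1$ this yields both parts of the conjecture. For Conjecture \ref{conj}, I would verify $\det(\lambda I_9-M)|_{\lambda=4\sqrt{2}}<0$; since $\det(\lambda I_9-M)\to+\infty$ as $\lambda\to+\infty$, the intermediate value theorem produces a real eigenvalue $\lambda_0>4\sqrt{2}$, and Proposition \ref{proprhoQ} then gives $\rho(\hat c_1(X_2))\geq\lambda_0>4\sqrt{2}=\rho(\hat c_1(Q^4))$.

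The main obstacle I anticipate is the selection of the basis $\hat{\mathcal{B}}_2$: if the guess above fails to produce a matrix whose powers eventually become strictly positive, I would adjust by replacing individual basis elements with other positive combinations involving the exceptional classes $E_2$, $S_2$, $L_2$, or by using the alternative basis $\mathcal{B}_2'$ coming from the projective bundle structure $X_2=\bP_{\bP^2_-}(\Omega_{\bP^2_-}(2)\oplus\mathcal{O})$, iterating until Perron-Frobenius applies. Once a workable basis is in hand, the remainder is routine polynomial and matrix computation strictly parallel to the $X_0$ case.
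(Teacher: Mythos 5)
Your proposal follows essentially the same route as the paper: pick a modified basis, compute the $9\times 9$ matrix of $\hat c_1(X_2)$ from the two-point invariants and the divisor axiom, apply Perron's theorem to an eventually positive power of $M$ (the paper uses $M^{13}$), and locate a real eigenvalue beyond $4\sqrt{2}$ from the sign of $\det(\lambda I_9-M)$ at $\lambda=4\sqrt{2}$. The only discrepancy is the sixth basis element: the paper takes $\wp_--S_2$ (which is $H_-^2$, the natural analogue of $H'^2$ coming from the $\bP^2$-bundle structure) rather than your tentative $\wp_++\wp_--S_2$, an adjustment you explicitly anticipate.
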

\begin{proof}
 Instead of the bases $\mathcal{B}_2$ and $\mathcal{B}_2'$, we consider another basis of  $H^*(X_2,\mathbb{Z})$ given by
$$\hat{\mathcal{B}}_2:=\{\mathbf{1}, H, H-E_2, \wp_+, \wp_-, \wp_--S_2, {1\over 2}H^3, {1\over 2}H^3-L_2, {1\over 2}H^4\}. $$
The associated matrix $M$ of the operator $\hat c_1(X_2)$ is given by $\hat c_1(X_2)\hat{\mathcal{B}}_2=\hat{\mathcal{B}}_2 M$ with
 $$M=\left(
    \begin{array}{ccccccccc}
0 & 0 & 0 & 0 & 3 & 0 & 4 & 4 & 0 \\ 3 & 0 & 1 & 0 & 0 & 0 & 0 & 0 & 4 \\ 1 & 0 & -1 & 0 & 0 & 0 & 3 & 0 & 0 \\ 0 & 4 & 3 & 0 & 0 &  0 & 0 & 0 & 0 \\ 0 & 3 & 0 & 0 & 0 & 1 & 0 & 0 & 0 \\
  0 & 1 & 4 & 0 & 0 & - 1 & 0 & 0 & 3 \\ 0 & 0 & 0 & 3 & 4 & 0 & 0 & 1 & 0 \\ 0 & 0 & 0 & 1 & 0 & 3 & 0 & - 1 & 0 \\ 0 & 0 & 0 & 0 & 0 & 0 & 4 & 3 & 0    \end{array}
  \right).
$$
\noindent Here the matrix $M$ is obtained by combining all the lemmas in this subsection, the divisor axiom for Gromov-Witten invariants, and the definition of the quantum product.

By direct calculations, the power $M^{13}$ is a positive matrix. Thus by Perron's theorem on positive matrices \cite{Perr}, the spectral radius $\rho(M^{13})$ is a simple eigenvalue of $M^{13}$, and the modulus of any other eigenvalue of $M^{13}$ is strictly less than $\rho(M^{13})$. Since $M$ is a real matrix and $13$ is odd,  it follows that    $\rho(M)=(\rho(M^{13}))^{1\over 13}$ is a simple eigenvalue of $M$, and the modulus of any other eigenvalue of $M$ is strictly less than $(\rho(M^{13}))^{1\over 13}$. Hence, Conjecture $\mathcal{O}$ holds for $X_2$. (Here we notice the Fano index of $X_2$ is equal to $1$, since
 $c_1(X_2).e=(4H-E_2).e=1$.)

Notice that $\det(\lambda I_{9}-M)|_{\lambda=4\sqrt{2}}<0$. Hence, there exists a real eigenvalue $\lambda_0$ in the interval $(4\sqrt{2}, +\infty)$. Hence, by Proposition \ref{proprhoQ}, we have
 $$\rho(\hat c_1(X_2))=\rho(M)\geq \lambda_0>4\sqrt{2}=\rho(\hat c_1(Q^4)).$$
That is, Conjecture \ref{conj} holds for $Y=Q^4$ and $Z=\mathbb{P}^2$.
\end{proof}
\begin{remark}  $M^{12}$
   is the first power of $M$ that becomes a positive matrix.
\end{remark}

\section*{Acknowledgements}

The authors would like to thank Kwokwai Chan  and Heng Xie   for  useful discussions.
 Hu is partially supported by NSFC Grants  11890662 and 11831017. Ke is partially supported by NSFC Grants 12271532 and  11831017. Li  is partially supported by NSFC Grant   11831017 and Guangdong Introducing Innovative and Enterpreneurial Teams No. 2017ZT07X355. Song is partially supported by Guangdong Basic and Applied Basic Research Foundation No. 2020A1515010876.

\end{document}